\newtheoremstyle{mystyle}
{3pt}% Space above
{3pt}% Space below
{\itshape}% Body font
{}% Indent amount
{\bold}% Theorem head font
{.}% Punctuation after theorem head
{.5em}% Space after theorem head
{}% Theorem head spec
\newtheorem{definition}{Definition}[section]
\newtheorem{theorem}[definition]{Theorem}
\newtheorem*{theorem*}{Theorem}
\newtheorem{remark}[definition]{Remark}
\newtheorem{lemma}[definition]{Lemma}
\newtheorem{deflem}[definition]{Definition and Lemma}
\numberwithin{equation}{section}
\newcommand{\RR}{\mathbb{R}}
\newcommand{\NN}{\mathbb{N}}
\newcommand{\grad}{\operatorname{grad}}
\renewcommand{\d}{\mathrm{d}}
\newcommand{\dal}{\mathrm{d}_{\alpha}}
\newcommand{\poly}{\mathrm{Poly}}
\newcommand{\paths}{\mathrm{Paths}}
\newcommand{\pathsx}{\mathrm{Paths}_x}
\newcommand{\ra}{\rightarrow}
\newcommand{\Cinf}{C^{\infty}}
\newcommand{\abs}[1]{\lvert #1 \rvert}
\newcommand{\footremember}[2]{%
	\footnote{#2}
	\newcounter{#1}
	\setcounter{#1}{\value{footnote}}%
}
\title{\bf Towards optimization techniques on diffeological spaces by generalizing Riemannian concepts} 
\author{Nico Goldammer\footremember{a}{Helmut-Schmidt-University / University of the Federal Armed Forces Hamburg, Holstenhofweg~85, 22043 Hamburg, Germany; \texttt{goldammer@hsu-hh.de}} and Kathrin Welker\footremember{b}{Helmut-Schmidt-University / University of the Federal Armed Forces Hamburg, Holstenhofweg~85, 22043 Hamburg, Germany; \texttt{welker@hsu-hh.de}} }
\date{}
\begin{document}
\maketitle

\begin{abstract}
	Diffeological spaces firstly introduced by J.M.~Souriau in the 1980s are a natural generalization of smooth manifolds. However, optimization techniques are only known on manifolds so far. 
	Generalizing these techniques to diffeological spaces is very challenging because of several reasons. One of the main reasons is that there are various definitions of tangent spaces which do not coincide. Additionally, one needs to deal with a generalization of a Riemannian space in order to define gradients which are indispensable for optimization methods.
	This paper is devoted to an optimization technique on diffeological spaces.
	Thus, one main aim of this paper is a suitable definition of a tangent space in view to optimization methods. Based on this definition, we present a diffeological Riemannian space and a diffeological gradient, which we need to formulate an optimization algorithm on diffeological spaces. 
	Moreover, in order to be able to update the iterates in an optimization algorithm on diffeological spaces, we present a diffeological retraction and the Levi-Civita connection on diffeological spaces.
	We give examples for the novel objects and apply the presented diffeological algorithm to an optimization problem.
\end{abstract}

\paragraph{Key words.}
Diffeological space, manifold, smooth space, Riemannian metric, tangent space, steepest descent method

\paragraph{AMS subject classifications.} 
49Q10, 53C15, 57P99,54C56, 57R35, 65K05

%49Q10		Optimization of shapes other than minimal surfaces
%
%53C15		General geometric structures on manifolds
%
%54C56		Shape theory in general topology
%
%57R35		Differentiable mappings in differential topology
%
%57N25		Shapes (aspects of topological manifolds)
%
%65K05		Numerical mathematical programming methods
%
%57P		Generalized manifolds
%	57P99	None of the above, but in this section
%
%58A05		Differentiable manifolds, foundations

%%%%%%%%%%%%%%%%%%%%%%%%%%%%%%%%%%%%%%%%%%%%%%%%%%%%%%%%%%%%%%%%%%%%%%%%%%%%%%%%%%%%%%%%%%%%%%%%%%%%%%%%%%%%%%%%%%%%%%%%%%%%%%%%%%%%%%%%%%%%%%%%%%%%%%%%%%%%%%%%%%%%%%%%%%%%%%%%%%%%%%%%%%%%%%%%%%%%%%%%%%%%%%%%%%%%%%%%%%%%%%%%%%%%%%%%%%%%%%%%%%%%%%%%%%%%%%%%%%%%%%%%%%%%%%%%%%%%%%%%%%%%%%%%%%%%%%%%%%%%%%%%%%%%%%%%%%%%%%%%%%%%%%%%%%%%%%%%%%%%%%%%%%%%%%%%%%%%%%%%%%%%%%%%%%%%%%%%%%%%%%%%%%%%%%%%%%%%%%%%%%%%%%%%%%%%%%%%%%%%%%%%%%%%%%%%%%%%%%%%%%%%%%%%%%%%%%%%

\section{Introduction}
\label{introduction}
	
	The goal of all optimization processes is either to minimize effort or to maximize benefit.
	Optimization techniques are used for example in logistics, control engineering, economics and the finance market.
	Furthermore, the use of optimization in machine learning becomes of greater interest, see, e.g., \cite{SraSuvritHosseiniReshad}.
	There are also several theoretical problems that we can formulate as minimization problems on smooth manifolds like the singular value decomposition in the linear algebra or finding geodesics on shapes (cf.~\cite{TR_MethodsOnR-Manifolds,Knut_Trumpf,WirthRumpf}).
	Theoretical and application problems are highly influenced by each other, e.g., application problems in machine learning lead to new theoretical approaches in the theory \cite{SraSuvritHosseiniReshad}.
	
	Often, optimization problems are posed on Euclidean spaces 
	 \cite{ChallalSamia}
	but there are also several problems that are not posed on an Euclidean space, e.g., the maximization of the Rayleigh quotient on the sphere to find the largest eigenvalue of a symmetric matrix \cite{TrefethenLloydBau}.
	%A lot of 
	In many circumstances, optimization problems share the same structure, the structure of a smooth Riemannian manifold.
	In the finite-dimensional case, optimization problems on Euclidean spaces can locally be thought of problems on smooth Riemannian manifolds because smooth Riemannian manifolds are isometrically embedded in some Euclidean space \cite{Spivak}.
	However, in this case, it is possible that the dimension of the manifold is smaller than the dimension of the Euclidean space.
	Optimization techniques on manifolds can exploit the underlying manifold structure.
	Thus, many traditional optimization methods on Euclidean spaces have been extended to smooth Riemannian manifold. 
	For example, the steepest decent method, conjugate gradient method and Newton method are formulated on Riemannian manifolds in  \cite{AbsilMahonySepulchre,Smith}. 
	Advanced techniques like BFGS quasi-Newton, Fletcher–Reeves nonlinear conjugate gradient iterations or Dai–Yuan-type Riemannian conjugate gradient method
	were fist consider by Gabay \cite{Gabay} and Udriste \cite{Udriste} and have been extended in \cite{BakerAbsilGallivan,RingWirth,Sato,Yang}.
	
	There are also various disciplines which study infinite-dimensional optimization problems. 
	As examples, we mention optimal control \cite{Fattorini} calculus of variations \cite{Cassel} or deterministic and stochastic shape optimization  \cite{GeiersbachLoayzaWelker,schulz2015Steklov}.
	In \cite{RingWirth}, the method of steepest descent as well as a Newton’s method are introduced on infinite-dimensional manifolds.
	It also provides a BFGS quasi-Newton method as well as the convergence of a Riemannian Fletcher–Reeves conjugate gradient iteration.
	Moreover, in recent work, it has been shown that shape optimization problems can be embedded in the framework of optimization on infinite-dimensional spaces (cf., e.g., \cite{Schulz,Welker}).
	Finding a shape space and an associated metric is a challenging task and different approaches lead to various models. There exists no common shape space suitable for all applications. 
	In principal, a finite-dimensional shape optimization problem can be obtained for example by representing shapes as splines.
	However, the connection of shape calculus with infinite-dimensional spaces \cite{Delfour-Zolesio-2001,ItoKunisch,SokoZol} leads to a more flexible approach. 
	One possible approach is to define shapes as elements of a Riemannian manifold as proposed in \cite{MichorMumford2,MichorMumford1}.
	From a theoretical point of view this is attractive because algorithmic ideas from \cite{Absil} can be combined with approaches from differential geometry.

	Not every space of interest is equipped with a Riemannian manifold structure.
	For example, a quotient of Riemannian manifolds that does not inherit the Riemannian manifold structure is not a Riemannian manifold.
	Moreover, in PDE-constrained shape optimization, finite element methods (FE) are usually used to discretize the models which leads to a shape space, the space of $H^{1/2}$-shapes, which is not a manifold but a diffeological space (cf.~\cite{Kathrin_SuitableSpaces}).
	If the space of interest is not equipped with a Riemannian manifold structure, the question arises what to do if we want to optimize on this space. 
	In general, we do not have a tangent space of a non-manifold space. However, we need tangent spaces in order to define a gradient or a covariant derivative.
	Additionally, objects like tangent spaces and gradients are also needed since they form the basic ingredients for even the simplest optimization techniques like the method of steepest decent.
	This paper addresses exactly the question above what to do if we want to optimize on a space which is not a Riemannian manifold.
	In particular, we concentrate on special objects necessary for optimization techniques on spaces which are not manifolds.
	We concentrate on the so-called diffeological spaces, firstly introduced by J.M.~Souriau in the 1980s (cf.~\cite{Souriau}).
	
	Diffeological spaces can be seen as a generalization of smooth manifolds.
	There are other concepts of generalization of smooth spaces, e.g., Chen, Fr\"{o}licher or Sikorski spaces. We refer to \cite{Stacey} for these various concepts and their comparison.   
	Diffeological spaces are stable under almost every operation like taking subsets and forming quotients, co-products or products.
	Thus, diffeological spaces are one of the generalizations which include infinite-dimensional manifolds, manifolds with corners as well as a variety of spaces with complicated local behavior.
	In consequence, a diffeology defines a smooth structure on nearly any space.
	In the last decades, a lot of concepts were generalized to diffeological spaces like vector spaces, cohomology, orbifolds, tangent spaces or tangent bundle (cf.~\cite{ChristensenWu,Iglesias,Iglesias_Karshon,laubinger,torre}).
	In order to formulate the method of steepest descent on a diffeological shape space, we need various objects like gradients or retractions not defined on diffeological spaces so far.
	One  main aim of this paper is the definition of a diffeological gradient.
	The formulation of such a definition is very challenging due to the several definitions of diffeological tangent spaces which do not coincide.
	There are at least six different definitions of diffeological tangent spaces (cf.~\cite{ChristensenWu,Iglesias,Iglesias_Karshon,laubinger,torre,vincent}).
	Some of those are very general \cite{torre} or based on category theory \cite{ChristensenWu}.
	Since optimization methods deal with directional derivatives, we need to choose a tangent space definition related to some generalizations of directional derivatives.
	We generalize the usual setting of tangent spaces by considering  paths through a point.
	For our definition of tangent spaces, we give the example of real valued polynomials in one variable denoted by $\poly(\RR)$.
	Moreover, we define a diffeological Riemannian space including a diffeological gradient using our definition of a tangent space.
	Finally, we define a diffeological retraction and generalize the definition of a Levi-Civita connection in order to be able to update iterates in an optimization technique.
	We conclude the paper with the formulation and application of the steepest descent method on diffeological spaces based on the above-mentioned objects defined in the paper.
	
	This paper is structured as follows.
	In section~\ref{sec:A_brief}, we give a brief introduction into diffeological spaces. 
	Section~\ref{sec:Opt_on_ds} clarifies the difference between diffeological spaces and smooth manifolds (cf.~subsection~\ref{subsec:DS_vs_Manif})  and gives an idea of an optimization algorithm on diffeological spaces (cf.~subsection~\ref{subsec:Algo}).
	The necessary tools to generalize optimization techniques from manifolds to diffeological spaces are defined in section~\ref{section:Objects}.
	In particular, we present a diffeological gradient and related objects like a diffeological tangent space and diffeological Riemannian space (subsection~\ref{subsec:DiffeologicalGradient}). In particular, we give two examples for a diffeological tangent space.
	Moreover, a diffeological retraction as well as a diffeological Levi-Civita connection is presented in subsection~\ref{subsec:UpdateIterates}. 
	Finally, in section~\ref{sec:Star}, we formulate the gradient decent method on diffeological spaces and apply it to an example.
	We end this paper with a conclusion and outlook in section~\ref{sec:conclusion}.

%%%%%%%%%%%%%%%%%%%%%%%%%%%%%%%%%%%%%%%%%%%%%%%%%%%%%%%%%%%%%%%%%%%%%%%%%%%%%%%%%%%%%%%%%%%%%%%%%%%%%%%%%%%%%%%%%%%%%%%%%%%%%%%%%%%%%%%%%%%%%%%%%%%%%%%%%%%%%%%%%%%%%%%%%%%%%%%%%%%%%%%%%%%%%%%%%%%%%%%%%%%%%%%%%%%%%%%%%%%%%%%%%%%%%%%%%%%%%%%%%%%%%%%%%%%%%%%%%%%%%%%%%%%%%%%%%%%%%%%%%%%%%%%%%%%%%%%%%%%%%%%%%%%%%%%%%%%%%%%%%%%%%%%%%%%%%%%%%%%%%%%%%%%%%%%%%%%%%%%%%%%%%%%%%%%%%%%%%%%%%%%%%%%%%%%%%%%%%%%%%%%%%%%%%%%%%%%%%%%%%%%%%%%%%%%%%%%%%%%%%%%%%%%%%%%%%%%%

\section{A brief introduction into diffeological spaces}
\label{sec:A_brief}
	In this section, we define diffeologies and related objects, which we need for the next sections.
	To make the paper self-contained, we provide most of definitions and lemmas from \cite{Iglesias} although some of them are not new. %and we cite the corresponding papers in the context.
	For a detailed introduction into diffeological spaces we refer to \cite{Iglesias}.

	We start with the definition of a \emph{diffeology}, with which a diffeological space is equipped, and \emph{plots}, which are the elements of a diffeology. 
	Roughly speaking, a diffeological space is a non-empty set together with a set of parametrizations which have to satisfy three axioms.
	\begin{definition}[Parametrization, diffeology, diffeological space, plots]
		Let $X$ a non-empty set. A {parametrization} in $X$ is a continuous map $U\to X$, where $U$ is an open subset of $\mathbb{R}^n$.
		A {diffeology} on $X$ is any set $D_X$ of parametrizations in $X$ such that the following three axioms are satisfied:
		\begin{itemize}
			\item[(i)] {Covering:} Any constant parametrization $\mathbb{R}^n\to X$ is in $D_X$.
			\item[(ii)] {Locality:} 
			Let $P$ be a parametrization in $X$, where $\text{\emph{dom}}(P)$ denotes the domain of $P$. If for all $r\in\text{\emph{dom}}(P)$, there is an open neighborhood $V$ of $r$ such that the restriction $P_{\mid_V} \in D_X$ , then  $P \in D_X$.
			\item[(iii)] {Smooth compatibility:} Let $p\colon U_p \to X$ be an element of $D_X$, where $U_{p}$ denotes an open subset of $\mathbb{R}^n$. Moreover, let $\varphi \colon U'\to U_p$ be a smooth map in the usual sense, where $U'$ denotes an open subset of $\mathbb{R}^m$. Then $p\circ \varphi \in D_X$ holds.
		\end{itemize}
		A non-empty set $X$ together with a diffeology $D_X$ on $X$ is called a diffeological space and denoted by $(X,D_X)$. The parametrizations $p\in D_X$ are called {plots} of the diffeology $D_X$. If a plot $p\in D_X$ is defined on $U_p\subset \mathbb{R}^n$, then $n$ is called the dimension of the plot and $p$ is called {$n$-plot}.
	\end{definition}

	Note that a diffeology as a structure and a diffeological space as a set equipped with a diffeology are distinguished only formally. Every diffeology on a set contains the underlying set as the set of non-empty 0-plots. In consequence, we do not differ between a diffeological space and a diffeology.
	In the literature, there are a lot of examples of diffeologies, e.g., the diffeology of the circle, the square, the set of smooth maps, etc.

	Given a diffeology $D_X$ on a space $X$, we notice that $D_X$ is just a subset of the set of all parametrizations of $X$. 
	Thus, we are able to compare diffeologies  of the same space by a subset relation.
	If $D'$ is another diffeology on $X$ such that $D_X \subset D'$, we say that $D_X$ is {finer} than $D'$. 
	Equivalently we say that $D'$ is {coarser} than $D_X$ if $ D' \subset D_X $.

	In the following we consider maps between diffeological spaces.
	Since we want to preserve structure, we consider special maps between diffeological spaces, the so-called \emph{smooth maps}.

	\begin{definition}[Smooth map between diffeological spaces, diffeomorphism]
		Let $(X,D_X),(Y,D_Y)$ be two diffeological spaces. A map $f\colon X\to Y$ is {smooth} if for each plot $p\in D_X$, $f\circ p$ is a plot of $ D_Y $, i.e., $f\circ D_X\subset D_Y$. If $f$ is bijective and if both, $f$ and its inverse $f^{-1} $, are smooth, $f$ is called a {diffeomorphism}. In this case, $X$ is called {diffeomorphic} to $Y$.
	\end{definition}

	The stability of diffeologies under almost all set constructions is one of the most striking properties of the class of diffeological spaces, e.g., the subset, quotient, functional or powerset diffeology. 
	In the following, we concentrate on the  \emph{subset} as well as the \emph{sum diffeology}. 
	They occur naturally if we handle diffeological spaces.
	Afterwards, we define the space $\Cinf(X,Y)$ as well as a diffeology $D_{\Cinf(X,Y)}$.

	Every subset of a diffeological space carries a natural \emph{subset diffeology}, which is defined by the \emph{pullback} of the ambient diffeology by the \emph{natural inclusion}. 
	For two sets $A,B$ with $A\subset B$, the \emph{(natural) inclusion} is given by
	$\iota_A\colon A\to B$, $x\mapsto x$. The pullback is defined as follows:

	\begin{deflem}[Pullback]
		Let $X$ be a set and $(Y,D_Y)$ be a diffeological space. Moreover, $f\colon X\to Y$ denotes some map.
		\begin{itemize}
			\item[(i)] 	There exists a coarsest diffeology of $X$ such that $f$ is smooth. 
			This diffeology is called the pullback of the diffeology $D_Y$ by $f$ and is denoted by $f^\ast(D_Y)$.
			\item[(ii)] Let $p$ be a parametrization in $X$. 
			Then $p\in f^\ast(D_Y)$ if and only if $f\circ p\in D_Y$. 
		\end{itemize}
	\end{deflem}

	The construction of subset diffeologies is related to so-called \emph{inductions}.

	\begin{definition}[Induction]
		Let $(X,D_X),(Y,D_Y)$ be diffeological spaces. A map $f\colon X\to Y$ is called induction if $f$ is injective and $f^\ast(D_Y)=D_X$, where $f^\ast(D_Y)$ denotes the pullback of the diffeology $D_Y$ by $f$.
	\end{definition}

	Now, we are able to define the subset diffeology.
	\begin{deflem}[Subset diffeology]
		\label{def_subsetdiff}
		Let $(X,D_X)$ be a diffeological space and let $A\subset X$ be a subset. 
		Then, $A$ carries a unique diffeology $D_A$, called the subset or induced diffeology, such that the inclusion map $\iota_A\colon A\to X$ becomes an induction, namely, $D_A=\iota_A^\ast(D_X)$.
		We call $(A,D_A)$ the diffeological subspace of $X$.
	\end{deflem}

	\begin{remark}
		\label{Rem:Funktionen}
		For convenience of the reader, we often write $ [p \colon U_p \ra X, u \mapsto p(u)] $ instead of only $p$, where $ p \colon U_p \ra X, u \mapsto P(u)$ is a map.
		Sometimes we only write $ [p \colon U_p \ra X] $ or $ [p \colon u \mapsto p(u)] $ if the other objects are obvious.
	\end{remark}

	If a family $(X_i, D_{X_i})_{i \in J}$ of diffeological spaces is given, we are able to define a natural diffeology on
	\begin{align*}
	\coprod_{i \in J} X_i  \coloneqq \{ (i,x_i) \mid i \in J, x_i \in X_i \}.
	\end{align*}
	The diffeology $ D_{\coprod_{i \in J} X_i}$ is given by the set of parametrizations that are locally plots for some $X_i$, i.e., 
	\begin{align*}
	&[p \colon  r \mapsto (i_r, p(r)) ] \in D_{\coprod_{i \in J} X_i} \\
	&\Leftrightarrow \forall r \, \exists i \, \exists V(r)\colon  \,i_{r'} = i  \text{ for all } r' \in V(r) \text{ and } p_{\mid_V} \in D_{X_i}.
	\end{align*}
	We call the diffeology $D_{\coprod_{i \in J} X_i}$ the \emph{sum diffeology}.

	For the set of smooth functions there is a natural diffeology, the so-called \emph{functional diffeology}, on which we concentrate now.
	Let $X$, $Y$ be two diffeological spaces. 
	We denote the set of all smooth functions between $X$ and $Y$ by  $C^{\infty}(X,Y)$ . 
	The map 
	\begin{align*}
		\mathrm{ev}\colon  C^{\infty}(X,Y) \times X \rightarrow Y,\, (f,x) \mapsto f(x)
	\end{align*}
	defines the \emph{evaluation map}. 
	Every diffeology on  $C^{\infty}(X,Y)$ for which the evaluation map is smooth is called a \emph{functional diffeology}.
	Then, $p\colon U_p \rightarrow C^{\infty}(X,Y)$ is a plot if and only if the map $ U_p \times X \rightarrow Y, \,(u,x) \mapsto p(u)(x) $ is smooth, which is equivalent to 	the fact that $ U_p \times U_q \rightarrow Y,\, (u_p,u_q) \mapsto p(u_p)(q(u_q)) $ is a plot of $ Y $ for all $  q \in D_X $.
	We call this diffeology the \emph{standard functional diffeology}.
	
	Finally, we consider quotients of diffeological spaces by some relations and their diffeology.
	Let $(X,D_X)$ be a diffeological space and $\sim$ be an equivalence relation on $X$. 
	Then, the quotient set $ \sfrac{X}{\sim} $ carries a unique diffeologcial sturcture $ D_{\sfrac{X}{\sim}} $.
	The space $ (\sfrac{X}{\sim}, D_{\sfrac{X}{\sim}}) $ is called  the \emph{diffeological quotient} of $X$ by the relation $\sim$.
	A parametrization $ p \colon U_p \rightarrow  \sfrac{X}{\sim}$ is a plot if $ p $ is locally equal to a plot of $ X $, i.e., for all $ u_0 \in U_p$ exist $ V \subset U_p $ open  and $ q \in D_X $  such that  $ p(u) = q(u) $ for all $ u \in V $.
	Now, we are familiar with the basics in diffeologies such that we can handle the concepts in the next sections.

%%%%%%%%%%%%%%%%%%%%%%%%%%%%%%%%%%%%%%%%%%%%%%%%%%%%%%%%%%%%%%%%%%%%%%%%%%%%%%%%%%%%%%%%%%%%%%%%%%%%%%%%%%%%%%%%%%%%%%%%%%%%%%%%%%%%%%%%%%%%%%%%%%%%%%%%%%%%%%%%%%%%%%%%%%%%%%%%%%%%%%%%%%%%%%%%%%%%%%%%%%%%%%%%%%%%%%%%%%%%%%%%%%%%%%%%%%%%%%%%%%%%%%%%%%%%%%%%%%%%%%%%%%%%%%%%%%%%%%%%%%%%%%%%%%%%%%%%%%%%%%%%%%%%%%%%%%%%%%%%%%%%%%%%%%%%%%%%%%%%%%%%%%%%%%%%%%%%%%%%%%%%%%%%%%%%%%%%%%%%%%%%%%%%%%%%%%%%%%%%%%%%%%%%%%%%%%%%%%%%%%%%%%%%%%%%%%%%%%%%%%%%%%%%%%%%%%%%

\section{Optimization techniques on diffeological spaces}
\label{sec:Opt_on_ds}
	Diffeological spaces can be seen as generalizations of manifolds. In subsection~\ref{subsec:DS_vs_Manif}, we clarify how we consider diffeological spaces as a generalization of manifolds.
	However, manifolds can be generalized in many ways. 
	In \cite{Stacey}, a summary and comparison of possibilities to generalize smooth manifolds are given. 
	We end this section with an optimization algorithm on manifolds in subsection~\ref{subsec:Algo} in order to clarify which objects for an optimization algorithms on diffeological spaces are needed.

	%%%%%%%%%%%%%%%%%%%%%%%%%%%%%%%%%%%%%%%%%%%%%%%%%%%%%%%%%%%%%%%%%%%%%%%%%%%%%%%%%%%%%%%%%%%%%%%%%%%%%%%%%%%%%%%%%%%%%%%%%%%%%%%%%%%%%%%%%%%%%%%%%%%%%%%%%%%%%%%%%%%%%%%%%%%%%%%%%%%%%%%%%%%%%%%%%%%%%%%%%%%%%%%%%%%%%%%%%%%%%%%%%%%%%%%%%%%%%%%%%%%%%%%%%%%%%%%%%%%%%%%%%%%%%%%%%%%%%%%%%%%%%%%%%%%%%%%%%%%%%%%%%%%%%%%%%%%%%%%%%%%%%%%%%%%%%%%%%%%%%%%%%%%%%%%%%%%%%%%%%%%%%%%%%%%%%%%%%%%%%%%%%%%%%%%%%%%%%%%%%%%%%%%%%%%%%%%%%%%%%%%%%%%%%%%%%%%%%%%%%%%%%%%%%%%%%%%%
	\subsection{Differences between diffeological spaces and manifolds}
	\label{subsec:DS_vs_Manif}
		In this subsection, we figure out the main differences between diffeological spaces and manifolds. 
		We consider manifolds as special diffeological spaces.
		For simplicity, we concentrate on finite-dimensional manifolds. However, it has to be mentioned that infinite-dimensional manifolds can also be understood as diffeological spaces. This follows, e.g., from \cite[Corollary~3.14]{KrieglMichor} or \cite{Losik}.

		Given a smooth manifold, there is a natural diffeology on this manifold consisting of all parametrizations which are smooth in the classical sense.
		This yields the following definition.

		\begin{definition}[Diffeological space associated with a manifold]
			Let $M$ be a finite-dimensional (not necessarily Hausdorff or paracompact) smooth manifold. The diffeological space associated with $M$ is defined as $(M,D_M)$, where the diffeology $D_M$ consists precisely of the parametrizations of $M$ which are smooth in the classical sense.
		\end{definition}

		\begin{remark}
			\label{remark_diff}
			If $M,N$ denote finite-dimensional manifolds, then $f\colon M\to N$ is smooth in the classical sense if and only if it is a smooth map between the associated diffeological spaces $(M,D_M)\to(N,D_N)$.
		\end{remark}

		In order to characterize the diffeological spaces which arise from manifolds, we need the concept of \emph{smooth points}. 
		For a diffeological space $(X,D_X)$, a point $x\in X$ is called smooth if there exists an open subset $U\subset X$ which is open with respect to the topology of $X$ and contains $x$ such that $(U,D_U)$ is diffeomorphic to an open subset of $\mathbb{R}^n$, where $D_U$ denotes the subset diffeology.
		The concept of smooth points is quite simple. Let us consider the coordinate axes, e.g., in $\mathbb{R}^2$. All points of the two axis with exception of the origin are smooth points.

		Now, we are able to formulate the following theorem, which connects manifolds and diffeological spaces, firstly introduced and proved in \cite{Kathrin_SuitableSpaces}.

		\begin{theorem}
			\label{theorem_diffman}
			A diffeological space $(X,D_X)$ is associated with a (not necessarily paracompact or Hausdorff) smooth manifold if and only if each of its points is smooth.
		\end{theorem}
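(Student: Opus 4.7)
The plan is to establish the two implications separately, with the bulk of the work in the reverse direction.

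For the forward implication, I would assume $(X,D_X)$ is the diffeological space associated with a smooth manifold $M$. Every point $x\in M$ lies in the domain of a classical chart $\varphi\colon U\to V\subset\mathbb{R}^n$, with $U$ open in the manifold topology of $X$ and $V$ open in $\mathbb{R}^n$. Since $\varphi$ and $\varphi^{-1}$ are smooth in the classical sense between finite-dimensional manifolds, Remark~\ref{remark_diff} ensures they are smooth as maps between the associated diffeological spaces, so $\varphi$ is a diffeomorphism $(U,D_U)\to(V,D_V)$ and $x$ is a smooth point.

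For the reverse implication, I would construct a manifold atlas directly from the smooth-point data. For each $x\in X$, pick an open neighborhood $U_x$ and a diffeological diffeomorphism $\varphi_x\colon(U_x,D_{U_x})\to(V_x,D_{V_x})$ with $V_x$ open in some $\mathbb{R}^{n_x}$, and declare $\mathcal{A}=\{(U_x,\varphi_x)\}_{x\in X}$ to be the atlas. Covering is immediate, and the essential step is to verify that on every nonempty overlap the transition map $\varphi_y\circ\varphi_x^{-1}$ is smooth in the classical sense. Here the diffeological formalism does the work: as a composition of diffeological diffeomorphisms it is diffeologically smooth between open subsets of Euclidean space, and on such domains diffeological smoothness coincides with classical smoothness, since the plots of the standard diffeology on $\mathbb{R}^n$ are precisely the classically smooth parametrizations.

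The subtlety I anticipate as the main obstacle concerns dimension, because a priori $n_x$ may vary with $x$. If $U_x\cap U_y\neq\emptyset$ while $n_x\neq n_y$, the transition map would be a diffeomorphism between open subsets of $\mathbb{R}^{n_x}$ and $\mathbb{R}^{n_y}$, contradicting invariance of domain; so smooth points of different dimensions sit in disjoint opens and simply produce different-dimensional components in the atlas. This is consistent with the statement's allowance of non-paracompact, non-Hausdorff manifolds, so no further hypothesis is needed. To close the argument I would finally check that the diffeology induced by the constructed atlas (parametrizations that are classically smooth in local coordinates) coincides with $D_X$. Both inclusions reduce via the charts $\varphi_x$ to the compatibility of standard and diffeological smoothness on Euclidean opens, which in turn follows from the smooth compatibility axiom of $D_X$ together with the definition of smooth maps between diffeological spaces applied to each $\varphi_x$ and $\varphi_x^{-1}$.
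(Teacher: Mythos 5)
The paper does not actually prove Theorem~\ref{theorem_diffman}; it only cites \cite{Kathrin_SuitableSpaces} for the proof, so there is no in-paper argument to compare against. Your proposal is the standard argument and is essentially correct: the forward direction via Remark~\ref{remark_diff} applied to classical charts, and the reverse direction by assembling the smooth-point neighborhoods into an atlas whose transition maps are diffeologically smooth between Euclidean opens and hence classically smooth (here one only needs that the identity is a plot of the standard diffeology, so a diffeologically smooth map between Euclidean opens is itself a classically smooth parametrization). Your handling of the dimension issue via invariance of domain is exactly the right observation, and it is precisely why the conclusion can only be a not-necessarily-pure-dimensional, not-necessarily-Hausdorff or paracompact manifold. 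Two small points deserve explicit mention in a written-up version: first, the sets $U_x$ are open in the $D$-topology, so $p^{-1}(U_x)$ is open for every plot $p$, which is what lets the locality axiom glue the local coordinate descriptions back into the statement $D_X=D_M$; second, one should note that the subset diffeology on an open $U_x\subset X$ restricted from $D_X$ agrees with the diffeology associated with $U_x$ as an open piece of the manifold being built, which again reduces to the coincidence of standard and diffeological smoothness on Euclidean opens. Neither point is a gap, only bookkeeping.
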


		\noindent
		This theorem clarifies the difference between manifolds and diffeological spaces. 
		Roughly speaking, a manifold of dimension $n$ is getting by glueing together open subsets of $\mathbb{R}^n$ via diffeomorphisms. In contrast, a diffeological space is formed by glueing together open subsets of $\mathbb{R}^n$ with the difference that the glueing maps are not necessarily diffeomorphisms and that $n$ can vary.
		However, note that manifolds deal with charts and diffeological spaces deal with plots.
		A system of local coordinates, i.e., a diffeomorphism $\varphi \colon U\to U'$ with $U\subset \mathbb{R}^n$ open and $U'\subset X$ open, can be viewed as a very special kind of plot $U\to X$ which induces an induction on the corresponding diffeological spaces.

		\begin{remark}
			Note that we consider smooth manifolds which do not necessary have to be Hausdorff or paracompact. If we understand a manifold as Hausdorff and paracompact, then the diffeological space $(X,D_X)$ in Theorem~\ref{theorem_diffman} has to be Hausdorff and paracompact.
			In this case, we need the the concept of open sets in diffeological spaces.
			Whether a set is open depends on the topology under consideration.
			In the case of diffeological spaces, openness depends on the $D$-topology, which is a natural topology and introduced in \cite{Iglesias} for each diffeological space. 
			Given a diffeological space $(X,D_X)$, 
			the $D$-topology is the finest topology such that all plots are continuous. That is, a subset $U$ of $X$ is open (in the D-topology) if for any plot $p\colon U_p\to X$ the pre-image $p^{-1}(U)\subset U_p$ is open.
			For more information about the $D$-topology we refer to the literature, e.g., \cite[Chapter~2, 2.8]{Iglesias} or \cite{ChristensenWu}.
			%For the sake of completeness, it has to be mentioned that the set $A$ in Definition~\ref{def_open} is meant to be open in the $D$-topology of $X$. 
		\end{remark}

	%%%%%%%%%%%%%%%%%%%%%%%%%%%%%%%%%%%%%%%%%%%%%%%%%%%%%%%%%%%%%%%%%%%%%%%%%%%%%%%%%%%%%%%%%%%%%%%%%%%%%%%%%%%%%%%%%%%%%%%%%%%%%%%%%%%%%%%%%%%%%%%%%%%%%%%%%%%%%%%%%%%%%%%%%%%%%%%%%%%%%%%%%%%%%%%%%%%%%%%%%%%%%%%%%%%%%%%%%%%%%%%%%%%%%%%%%%%%%%%%%%%%%%%%%%%%%%%%%%%%%%%%%%%%%%%%%%%%%%%%%%%%%%%%%%%%%%%%%%%%%%%%%%%%%%%%%%%%%%%%%%%%%%%%%%%%%%%%%%%%%%%%%%%%%%%%%%%%%%%%%%%%%%%%%%%%%%%%%%%%%%%%%%%%%%%%%%%%%%%%%%%%%%%%%%%%%%%%%%%%%%%%%%%%%%%%%%%%%%%%%%%%%%%%%%%%%%%%
	\subsection{Towards optimization algorithms on diffeological spaces}
	\label{subsec:Algo}
	
		In the following, we would like to give an idea of an optimization algorithm on diffeological spaces.
		For this purpose, we consider the steepest descent algorithm~\ref{Algo} on Riemannian manifolds.

		\begin{algorithm}
			\caption{Steepest descent method on the complete Riemannian manifold~$(M,g)$}
			\label{Algo}
				\begin{algorithmic}
					\State \textbf{Require:} Objective function $f$ on $M$; Levi-Civita connection $\nabla$ on $(M,g)$;  \\\phantom{\textbf{Require:} }step size strategy.
					\vspace{.1cm}
					\State \textbf{Goal:} Find the solution of $\min\limits_{x\in M}f(x)$.
					\vspace{.1cm}
					\State \textbf{Input:} Initial data $x_0\in M$. 
					\vspace{.3cm}
					
					\State \textbf{for} $k=0,1,\dots$ \textbf{do}
					\vspace{.1cm}
					\State [1] Compute $\text{grad}f(x_k)$ denoting the Riemannian shape gradient of $f$ in $x_k$.
					\vspace{.1cm}
					\State [2] Compute step size $t_k$.
					\vspace{.1cm}
					\State [3] Set $			x_{k+1}:= \operatorname{exp}_{x_k}\left(-t_k \text{grad}f(x_k)\right)$, where $\mathrm{exp}\colon TM\to M$ denotes the ex- \\\phantom{[3]} ponential map
					\vspace{.1cm}
					\State \textbf{end for}
					\vspace{.3cm}
				\end{algorithmic}
		\end{algorithm}

		\begin{remark}
			\label{rem:Algo}
			In algorithm~\ref{Algo}, it is assumed that the Riemannian manifold needs to be complete with Riemannian structure $g$ and Levi-Civita connection $\nabla$.
		The completeness of a Riemannian manifold results in a global exponential map, which is then defined by the Levi-Civita connection. 
		Since performing the exponential map is an expensive operation in optimization strategies, it is often approximated and replaced by a so-called retraction mapping. 
		If we are using a retraction mapping it is possible to drop the completeness of the Riemannian manifold.
		\end{remark}

		The aim of the paper is to generalize the necessary objects in algorithm~\ref{Algo} from manifolds to diffeological spaces in order to get the steepest descent algorithm on diffeological spaces. 
		First, we notice that a \emph{complete Riemannian manifold} with a \emph{Riemannian structure} is required. 
		Thus, we also need equivalent concepts on a diffeological space.
		In addition, a \emph{gradient} as well as an \emph{exponential map} need to be specified on diffeological spaces.
		As mentioned in remark~\ref{rem:Algo}, the exponential map is generally not a local diffeomorphism in infinite-dimensional manifolds and, additionally, an expensive operation in optimization techniques. Thus, we mainly concentrate on a \emph{retraction mapping} in this paper.
		Since---assuming its existence--- an exponential map can be defined by using a Levi-Civita connection (cf.~\cite{HelgasonSigurdur} for the usual setting),  we also have a brief look on  a \emph{Levi-Civita connection} in this paper.
		Finally, we recognize that algorithm~\ref{Algo} works with \emph{smooth maps} $ f\colon M \to \RR $.
		In the steepest descent method on manifolds, we usually do not need smooth functions but due to the nature of diffeological spaces, only maps that are smooth in a diffeological sense are of interest. 
		Smooth functions between diffeological spaces are already defined in section~\ref{sec:A_brief}. The next section focuses on the remaining objects.

%%%%%%%%%%%%%%%%%%%%%%%%%%%%%%%%%%%%%%%%%%%%%%%%%%%%%%%%%%%%%%%%%%%%%%%%%%%%%%%%%%%%%%%%%%%%%%%%%%%%%%%%%%%%%%%%%%%%%%%%%%%%%%%%%%%%%%%%%%%%%%%%%%%%%%%%%%%%%%%%%%%%%%%%%%%%%%%%%%%%%%%%%%%%%%%%%%%%%%%%%%%%%%%%%%%%%%%%%%%%%%%%%%%%%%%%%%%%%%%%%%%%%%%%%%%%%%%%%%%%%%%%%%%%%%%%%%%%%%%%%%%%%%%%%%%%%%%%%%%%%%%%%%%%%%%%%%%%%%%%%%%%%%%%%%%%%%%%%%%%%%%%%%%%%%%%%%%%%%%%%%%%%%%%%%%%%%%%%%%%%%%%%%%%%%%%%%%%%%%%%%%%%%%%%%%%%%%%%%%%%%%%%%%%%%%%%%%%%%%%%%%%%%%%%%%%%%%%

\section{Necessary objects for diffeological optimization techniques}
\label{section:Objects}
	
	In order to extend optimization techniques from Riemannian manifolds to diffeological spaces we need to transfer concepts known on manifolds into a diffeological setting. 
	One of the main aims of this section is the definition of a \emph{diffeological gradient}.
	The definition of a diffeological gradient is very challenging because of the several definitions of tangent spaces that do not coincide. 
	For example, some of those are very general \cite{torre}, based on category theory \cite{ChristensenWu} or based on the idea of charts \cite{laubinger}.
	
	In subsection~\ref{subsec:DiffeologicalGradient}, we set up  a scheme of a diffeological Riemannian space combined with a diffeological gradient based on specific tangent spaces suitable for optimization methods.
	Subsection~\ref{subsec:UpdateIterates} presents a \emph{diffeological retraction} and the \emph{Levi-Civita connection on diffeological spaces} in order to be able to update the iterates in an optimization algorithm.
	The investigations of the Levi-Cevita connection with view on the existence and uniqueness 
	are beyond the scope of this paper. 
	In general, the existence of a Levi-Civita connection and, thus, also of the exponential map is not guaranteed in infinite-dimensional manifold.
	Therefore, we focus on  a diffeological retraction instead of the exponential map.

	%%%%%%%%%%%%%%%%%%%%%%%%%%%%%%%%%%%%%%%%%%%%%%%%%%%%%%%%%%%%%%%%%%%%%%%%%%%%%%%%%%%%%%%%%%%%%%%%%%%%%%%%%%%%%%%%%%%%%%%%%%%%%%%%%%%%%%%%%%%%%%%%%%%%%%%%%%%%%%%%%%%%%%%%%%%%%%%%%%%%%%%%%%%%%%%%%%%%%%%%%%%%%%%%%%%%%%%%%%%%%%%%%%%%%%%%%%%%%%%%%%%%%%%%%%%%%%%%%%%%%%%%%%%%%%%%%%%%%%%%%%%%%%%%%%%%%%%%%%%%%%%%%%%%%%%%%%%%%%%%%%%%%%%%%%%%%%%%%%%%%%%%%%%%%%%%%%%%%%%%%%%%%%%%%%%%%%%%%%%%%%%%%%%%%%%%%%%%%%%%%%%%%%%%%%%%%%%%%%%%%%%%%%%%%%%%%%%%%%%%%%%%%%%%%%%%%%%%
	\subsection{Definition of a diffeological gradient}
	\label{subsec:DiffeologicalGradient}
	
		In order to define a diffeological gradient, we need a bunch of definitions, e.g., vector spaces, tangent spaces and tangent bundles.
		First, we concentrate on a \emph{diffeological tangent space} in subsection~\ref{subsection:DiffTangentSpace} and subsection~\ref{subsec:Examples}.
		Afterwards, we generalize the concepts of \emph{vector} and \emph{tangent bundles} from manifolds to diffeological spaces and concentrate on a \emph{diffeological Riemannian space} (subsection~\ref{subsec:DiffRiemSpace}).
		With all these concepts we are able to define a diffeological gradient at the end of subsection~\ref{subsec:DiffRiemSpace}.

	%%%%%%%%%%%%%%%%%%%%%%%%%%%%%%%%%%%%%%%%%%%%%%%%%%%%%%%%%%%%%%%%%%%%%%%%%%%%%%%%%%%%%%%%%%%%%%%%%%%%%%%%%%%%%%%%%%%%%%%%%%%%%%%%%%%%%%%%%%%%%%%%%%%%%%%%%%%%%%%%%%%%%%%%%%%%%%%%%%%%%%%%%%%%%%%%%%%%%%%%%%%%%%%%%%%%%%%%%%%%%%%%%%%%%%%%%%%%%%%%%%%%%%%%%%%%%%%%%%%%%%%%%%%%%%%%%%%%%%%%%%%%%%%%%%%%%%%%%%%%%%%%%%%%%%%%%%%%%%%%%%%%%%%%%%%%%%%%%%%%%%%%%%%%%%%%%%%%%%%%%%%%%%%%%%%%%%%%%%%%%%%%%%%%%%%%%%%%%%%%%%%%%%%%%%%%%%%%%%%%%%%%%%%%%%%%%%%%%%%%%%%%%%%%%%%%%%%%
	\subsubsection{Diffeological tangent space}
	\label{subsection:DiffTangentSpace}

		The definition of a diffeological tangent space is a challenging task because there does not exist a general diffeological tangent space. 
		For example, there are  two  algebraic motivated definitions in \cite{ChristensenWu},  a definition that concentrates on multilinear algebra in \cite{Iglesias}, a definition that is derived from the tangent spaces of the domains of the plots and a very general definition in \cite{torre}.
		It is an open question if some of those tangent space definitions are equivalent. However, we know that not all of them are equal to each other.
		For example, the two  tangent spaces introduced in \cite{ChristensenWu} do not coincide.
		
		Since optimization methods deal with directional derivatives, we need a tangent space definition related to some generalizations of directional derivatives. Thus, with view on optimization techniques, we need a novel definition of a tangent space.
		In the following, we define a diffeological tangent space by generalizing the usual setting of tangent spaces by considering  paths through a point and discuss its properties.

		A tangent space in the sense of a manifold is a vector space and has some important properties.
		Since the diffeological tangent space is a generalization, we first need to generalize the concepts of vector spaces to diffeological spaces.
		We introduce the so-called \emph{diffeological $\mathbb{K}$-vector spaces}, where $\mathbb{K}\in \{\mathbb{R},\mathbb{C}\}$. In \cite{Iglesias},  a diffeological space $(E,D_E) $ is called a diffeological $\mathbb{K}$-vector space if 
		$E$ is a $\mathbb{K}$-vector space and addition in $E$ as well as scalar multiplication with elements in $\mathbb{K}$ are smooth if we equip $ \mathbb{K} $ with the standard diffeology.
		The set of linear maps between two diffeological $\mathbb{K}$-vector spaces $E$ and $F$ is denoted by $L(E,F)$. 
		In contrast to manifolds, linear maps between diffeological $\mathbb{K}$-vector spaces are not consequently smooth (cf., e.g., \cite{Iglesias}).
		Thus, we denote the set of smooth linear maps between $ E $ and $ F $ by $L^{\infty}(E,F)$.
		We have
		\begin{align*}
		L^{\infty}(E,F) = L(E,F) \cap C^{\infty}(E,F).
		\end{align*}
		For every $\mathbb{K}$-vector space $E$, there exists a diffeology on $ E $ such that $ E $ becomes a diffeological  $\mathbb{K}$-vector space. This diffeology is called the \emph{fine diffeology}.
		The fine diffeology trivializes the set of linear functions. To be more precisely, if $E$ and $F$ are diffeological $\mathbb{K}$-vector spaces and $ E $ is equipped with the fine diffeology, then 
		\begin{align}
		\label{eq:sm_lin=lin}
		L^{\infty}(E,F) = L(E,F).
		\end{align}

		Next, we concentrate on derivatives of smooth maps from a diffeological space into the real numbers.
		Since a map from a diffeological space into another one is generally not differentiable in the Euclidean sense, we focus on the plots of the source space. 
		
		In order to define a tangent space $ T_xX $ at $ x \in X $ for a smooth manifold $ X $,
		one  can consider the set of all paths that maps $ 0 $ to $ x $ and identify two of these paths with each other if their derivative coincide in $ 0 $.
		Then, an element in $ T_xX $ is  a residue class $ [\gamma] $ for $ \gamma\colon  \RR \rightarrow X $ such that $ \gamma(0) = x $.
		Moreover, the directional derivative of a function  $ f\colon X \rightarrow \RR $ in direction $ v $ is given by the derivative $ \tfrac{\partial (f \circ \gamma)(t) }{\partial t}_{\mid_{t = 0}} $ for $ \gamma\colon \RR \rightarrow X $ such that $ \gamma(0) = x $ and $ \gamma '(0) = v $.
			Similar to directional derivatives on manifolds or to the definition of a tangent space on a manifold, we consider paths from $\RR$ into the diffeological space $ X $ that pass through an element $ x \in X $.
		We denote the set of all smooth paths in a diffeological space $X$ by
		\begin{align*}
			\mathrm{Paths}(X)  \coloneqq \{ \alpha \in C^{\infty}(\mathbb{R},X) \}
		\end{align*}
		and the set of all paths in $X$ centered at $x \in X$  by
		\begin{align*}
			\mathrm{Paths}_x(X)  \coloneqq \{ \alpha \in C^{\infty}(\mathbb{R},X) \mid \alpha(0) =x \}.
		\end{align*}
		Since $\mathrm{Paths}_x(X)$ is a subset of $C^{\infty}(\mathbb{R},X)$, $\mathrm{Paths}_x(X)$ equipped with the functional diffeology is a diffeological subspace of $C^{\infty}(\mathbb{R},X)$.
		Now, we can define the so-called \emph{path derivative} inspired by the usual directional derivative.
		
		\begin{definition}[Path derivative]
			\label{def:PathDerivative}
			Let $X$ be a diffeological space and $\alpha \in \mathrm{Paths}_x(X)$. The {path derivative}  in direction $\alpha$ is defined by 
			\begin{align}
			\label{PathDerivative}
			\d_{\alpha}\colon  C^{\infty}(X,\mathbb{R}) \rightarrow \mathbb{R},\, f \mapsto
			 \d_{\alpha}(f)   \coloneqq \dfrac{\partial}{\partial u}\left(f(\alpha(u)\right)_{\mid_{u=0}}\in \mathbb{R}.
			\end{align}
			One calls $\d_{\alpha}(f) $ the \emph{path derivative} of the function $f\in C^{\infty}(X,\mathbb{R})$  in direction $\alpha$.
		\end{definition}
		
		Inspired by the definition of a tangent space of a manifold, it would be natural to define a diffeological tangent space at a point $x$ as the set of all path derivatives through $x$, where two paths are identified by each other if their derivative coincide in  $ 0 $.
		Unfortunately, we are generally not able to derive an element in $ 	\mathrm{Paths}_x(X) $, i.e., a (diffeological) smooth function from $\mathbb{R}$ into a diffeological space $X$.
		Thus, we need a concept of a tangent space without such an identification.
		For this, we  introduce a first \emph{tangent cone} of a diffeological space $X$ at $x\in X$ by
			\begin{align*}
			\tilde{C}_xX  \coloneqq \{ \mathrm{d}_{\alpha} \mid \alpha \in \mathrm{Paths}_x(X) \},
			\end{align*}
			which is firstly defined in \cite{vincent}.
			In the following, we say $p\colon  U_p \rightarrow \tilde{C}_xX$ with $U_{p}$ denoting an open subset of $\mathbb{R}^n$ is a plot if it is locally equal to  $\mathrm{d}_{\gamma}$ for a plot $\gamma$ of $\mathrm{Paths}_x(X)$.  
			The diffeology that is given by these plots is called the \emph{tangent cone diffeology}.

			In order to obtain a \emph{diffeological tangent space} from the tangent cone $ \tilde{C}_xX $, we consider the span of the tangent cone as follows:
			\begin{itemize}
				\item 
				An element in $\mathrm{span}(\tilde{C}_xX)$ is a finite sum of elements in $ \tilde{C}_xX $ that are multiplied by a scalar.
				\item
				A plot of $\mathrm{span}(\tilde{C}_xX)$ isgiven by a mapping
				\begin{align*}
					U\to \mathrm{span}(\tilde{C}_xX), \, u \mapsto \sum_{i=1}^{n}\lambda_i(u)p_i(u)
				\end{align*}
				with $U\subset\mathbb{R}^n$ denoting an open subset with $n \in \mathbb{N}$, $\lambda_i\colon  U \rightarrow\mathbb{R}$ is smooth and $p_i \colon U \rightarrow \tilde{C}_xX$ is a plot of $\tilde{C}_xX$.
			\end{itemize}
		The diffeology introduced on $ \mathrm{span}{\tilde{C}_xX} $ is defined  in \cite{vincent} and called \emph{the weak diffeology}.
		Equipped with the weak diffeology, $ \mathrm{span}(\tilde{C}_xX) $ becomes the finest possible diffeological vector space, such that $ D_{\tilde{C}_xX} \subset D_{\mathrm{span}(\tilde{C}_xX)}$. 
		Therefore, in \cite{vincent}, the \emph{(diffeological) tangent space} of $X$ at $x$ is defined by
			\begin{align}
			\label{def:TxX_tilde}
			\tilde{T}_xX  \coloneqq \mathrm{span}\{\tilde{C}_xX\}.
			\end{align}
		Similar to the usual setting on smooth manifolds, for $ x \in X $ the tangent map of a smooth map $ f \colon X \rightarrow Y $ between two diffeological spaces $ X,Y $ is given by
			\begin{align*}
			\tilde{T}_xf: \tilde{T}_xX \rightarrow \tilde{T}_{f(x)}Y,\,
			\d_{\alpha}		\mapsto 		\d_{f \circ \alpha},
			\end{align*}
		which is a smooth map (cf. \cite{vincent}).
		Thanks to the definition of the tangent map we are able to find an identification of elements in $ \mathrm{Paths}_x(X) $ similar to the one on manifolds.
		\begin{definition}
			\label{def:Identification}
			Let $ X $ be a diffeological space, $ x \in X $ and $ \alpha, \beta \in \mathrm{Paths}_x(X) $. We define the relation $ \sim $ by
			\begin{align*}
			\alpha \sim \beta \quad  \Leftrightarrow \quad \tilde{T}_0 \alpha = \tilde{T}_0 \beta.
			\end{align*}
		\end{definition}
		With the identification in definition~\ref{def:Identification}, we are now able to define a tangent space that is more similar to the definition of a tangent space on manifolds than the tangent space given in (\ref{def:TxX_tilde}). 
		In order to define it, we need a novel version of the tangent cone.
		
		\begin{definition}[Tangent cone]
			\label{def:CxX}
			Let $ X $ be a diffeological space and $ x $ an element in $ X $. Then the tangent cone is defined by
			\begin{align*}
				C_xX \coloneqq \{ \d_{\alpha} \mid \alpha \in \sfrac{\mathrm{Paths}_x(X)}{\sim}  \}.
			\end{align*}
		\end{definition}
		
		A parametrization $ p \colon U_p \rightarrow C_xX $ is a plot if and only if $ p(u) = \d_{q(u)}  $ holds locally for a plot $ q \colon U_q \rightarrow  \sfrac{\mathrm{Paths}_x(X)}{\sim}$. Now, we can define the diffeological tangent space analogue to $ \tilde{T}_xX $.
		\begin{definition}[Diffeological tangent space]
			\label{def:TxX}
			The tangent space of a diffeological space $X$ in $x\in X$ is defined by
			\begin{align}
			\label{TangentSpace}
				T_xX \coloneqq \mathrm{span}(C_xX).
			\end{align}
			The cotangent space of $T_xX$ is the space of smooth functionals of $T_xX$, in symbols
			\begin{align*}
			T_x^*X  \coloneqq L^{\infty}(T_xX,\mathbb{R}).
			\end{align*}
		\end{definition}
		
		We have the following diffeomorphism:
		
		\begin{theorem}
			\label{The:Isomorphism}
			Let $ X $ be a diffeological space and $ x \in X$. The diffeological tangent spaces $T_x X$ and $\tilde{T}_xX$ are diffeomorphic. 
		\end{theorem}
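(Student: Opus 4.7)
The plan is to exhibit the identity map as the claimed diffeomorphism. To do so, I would argue in two stages: first, that $\tilde C_xX$ and $C_xX$ coincide as subsets of the linear functionals on $\Cinf(X,\RR)$, and second, that the tangent cone diffeologies on this common set agree. The statement for the spans then follows at once from the definition of the weak diffeology.

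For the first stage, I would establish that the relation $\alpha\sim\beta$ of Definition~\ref{def:Identification} is equivalent to $\dal=\dbet$. The forward implication is immediate: evaluating $\tilde T_0\alpha=\tilde T_0\beta$ on the tangent cone element $\d_{\mathrm{id}_\RR}\in \tilde C_0\RR$ yields $\dal=\dbet$. For the converse, fix $\gamma\in\mathrm{Paths}_0(\RR)$ and $f\in\Cinf(X,\RR)$. Since $f\circ\alpha\colon\RR\to\RR$ is smooth in the ordinary sense, the classical chain rule gives
\begin{align*}
\d_{\alpha\circ\gamma}(f)=(f\circ\alpha\circ\gamma)'(0)=(f\circ\alpha)'(0)\cdot\gamma'(0)=\gamma'(0)\cdot\dal(f),
\end{align*}
and the analogous identity holds for $\beta$. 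Hence $\dal=\dbet$ forces $\tilde T_0\alpha$ and $\tilde T_0\beta$ to agree on every generator $\d_\gamma$ of $\tilde C_0\RR$, and, by linearity of the tangent map on the span, on all of $\tilde T_0\RR$; so $\alpha\sim\beta$. Thus $\tilde C_xX=C_xX$ as subsets of $L(\Cinf(X,\RR),\RR)$.

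For the second stage, I would exploit the plot description of the quotient diffeology recalled at the end of Section~\ref{sec:A_brief}: a parametrization into $\sfrac{\mathrm{Paths}_x(X)}{\sim}$ is a plot iff it locally lifts to a plot of $\mathrm{Paths}_x(X)$. Coupled with the plot descriptions of $C_xX$ (via such liftable $q$) and of $\tilde C_xX$ (directly via plots of $\mathrm{Paths}_x(X)$), this shows that a parametrization of the common underlying set is a plot for one tangent cone diffeology iff it is a plot for the other. Since the weak diffeology on a span is determined solely by the plot set of its generating cone, with plots locally of the form $u\mapsto\sum_i\lambda_i(u)p_i(u)$ for $\lambda_i$ smooth and $p_i$ plots of the cone, the weak diffeologies on $T_xX$ and $\tilde T_xX$ coincide. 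The set-theoretic identity is therefore the required diffeomorphism.

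The main obstacle I anticipate is the converse direction in the first stage: it forces one to bootstrap pointwise agreement of path derivatives up to an equality of the full tangent maps $\tilde T_0\alpha=\tilde T_0\beta$, and this bootstrapping relies crucially on the fact that $f\circ\alpha$ is an ordinary-smooth map $\RR\to\RR$, so that the classical chain rule is available even though $\alpha$ itself lives in a diffeological space with no intrinsic differential calculus. Once that step is secured, the remaining identifications of diffeologies are merely a matter of unfolding definitions.
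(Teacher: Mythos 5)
Your proposal is correct, and its second stage — identifying the plots of $C_xX$ with those of $\tilde{C}_xX$ via the plot description of the quotient diffeology, and then passing to the spans with their weak diffeologies — is exactly the paper's (very terse) argument. Your first stage, showing that $\alpha\sim\beta$ is equivalent to $\dal=\dbet$ by evaluating $\tilde{T}_0\alpha$ on $\d_{\mathrm{id}_\RR}$ and conversely using the classical chain rule on $f\circ\alpha\colon\RR\to\RR$, is a worthwhile addition the paper omits: it makes the set-level equality $\tilde{C}_xX=C_xX$ explicit and, in particular, verifies that $\d_\alpha$ is well defined on equivalence classes, which Definition~\ref{def:CxX} tacitly assumes.
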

		
		\begin{proof}
			Thanks to the quotient diffeology, a parametrization $ p\colon U_p \rightarrow C_xX $ is a plot if and only if $ p(u) =  \d_{r(u)}$ holds locally for a plot $ r $ of $ \mathrm{Paths}_x(X) $.
			Thus, $ p $ is a plot of $ C_xX $ if and only if $ p $ is a plot of $ \tilde{C}_xX $.
			This gives that $ \tilde{C}_xX $ and $ C_xX $ coincide.
		\end{proof}
		
		Thanks to theorem~\ref{The:Isomorphism}, we know that the diffeological tangent spaces defined in (\ref{def:TxX_tilde}) and (\ref{TangentSpace}) are equal to each other. However, in contrast to $T_xX $, where an element $ \d_{\alpha} \in T_xX $ is given by exactly one element in $ \sfrac{\mathrm{Paths}_x(X)}{\sim} $, an element in $\tilde{T}_xX$ can generally be obtained by more than one element in $ \mathrm{Paths}_x(X) $.

		%%%%%%%%%%%%%%%%%%%%%%%%%%%%%%%%%%%%%%%%%%%%%%%%%%%%%%%%%%%%%%%%%%%%%%%%%%%%%%%%%%%%%%%%%%%%%%%%%%%%%%%%%%%%%%%%%%%%%%%%%%%%%%%%%%%%%%%%%%%%%%%%%%%%%%%%%%%%%%%%%%%%%%%%%%%%%%%%%%%%%%%%%%%%%%%%%%%%%%%%%%%%%%%%%%%%%%%%%%%%%%%%%%%%%%%%%%%%%%%%%%%%%%%%%%%%%%%%%%%%%%%%%%%%%%%%%%%%%%%%%%%%%%%%%%%%%%%%%%%%%%%%%%%%%%%%%%%%%%%%%%%%%%%%%%%%%%%%%%%%%%%%%%%%%%%%%%%%%%%%%%%%%%%%%%%%%%%%%%%%%%%%%%%%%%%%%%%%%%%%%%%%%%%%%%%%%%%%%%%%%%%%%%%%%%%%%%%%%%%%%%%%%%%%%%%%%%%%
		\subsubsection{Examples of diffeological tangent spaces}
		\label{subsec:Examples}
		
			In the previous subsection, the diffeolgocial tangent space is defined. This subsection is devoted to two examples of diffeological tangent spaces.
			The first one is the axis in $\RR^2$ equipped with the subset diffeology.
			The set of polynomials in one real variable is the second example. We prove that there is a natural diffeology on this set of polynomials and that the tangent space at a point is given by $\RR^{\infty}$.

		%%%%%%%%%%%%%%%%%%%%%%%%%%%%%%%%%%%%%%%%%%%%%%%%%%%%%%%%%%%%%%%%%%%%%%%%%%%%%%%%%%%%%%%%%%%%%%%%%%%%%%%%%%%%%%%%%%%%%%%%%%%%%%%%%%%%%%%%%%%%%%%%%%%%%%%%%%%%%%%%%%%%%%%%%%%%%%%%%%%%%%%%%%%%%%%%%%%%%%%%%%%%%%%%%%%%%%%%%%%%%%%%%%%%%%%%%%%%%%%%%%%%%%%%%%%%%%%%%%%%%%%%%%%%%%%%%%%%%%%%%%%%%%%%%%%%%%%%%%%%%%%%%%%%%%%%%%%%%%%%%%%%%%%%%%%%%%%%%%%%%%%%%%%%%%%%%%%%%%%%%%%%%%%%%%%%%%%%%%%%%%%%%%%%%%%%%%%%%%%%%%%%%%%%%%%%%%%%%%%%%%%%%%%%%%%%%%%%%%%%%%%%%%%%%%%%%%%%
		\paragraph{Example 1: The cross.}
		\label{example1}
			We consider the set 
			\begin{equation}
			\label{cross}
			\mathcal{X}  \coloneqq \{ (x,y) \in \RR^2 \mid xy = 0\}
			\end{equation}
			 in $\RR^2$.
			If we equip $\mathcal{X}$ with the subset diffeology of $\RR^2$, we obtain
				\begin{align*}
					C_x\mathcal{X} = \{ r \dal \mid r \in \RR \text{, } \alpha\colon  \RR \ra \RR^2 \text{ is smooth in the Euclidean sense with } \alpha(0) = x \}.
				\end{align*}	
			From this follows that the tangent space at $x \in \mathcal{X}$ is just $\RR$ with the standard diffeology if $x \neq 0$ and the usual $\RR^2$ with the standard diffeology for $x = 0$.

		%%%%%%%%%%%%%%%%%%%%%%%%%%%%%%%%%%%%%%%%%%%%%%%%%%%%%%%%%%%%%%%%%%%%%%%%%%%%%%%%%%%%%%%%%%%%%%%%%%%%%%%%%%%%%%%%%%%%%%%%%%%%%%%%%%%%%%%%%%%%%%%%%%%%%%%%%%%%%%%%%%%%%%%%%%%%%%%%%%%%%%%%%%%%%%%%%%%%%%%%%%%%%%%%%%%%%%%%%%%%%%%%%%%%%%%%%%%%%%%%%%%%%%%%%%%%%%%%%%%%%%%%%%%%%%%%%%%%%%%%%%%%%%%%%%%%%%%%%%%%%%%%%%%%%%%%%%%%%%%%%%%%%%%%%%%%%%%%%%%%%%%%%%%%%%%%%%%%%%%%%%%%%%%%%%%%%%%%%%%%%%%%%%%%%%%%%%%%%%%%%%%%%%%%%%%%%%%%%%%%%%%%%%%%%%%%%%%%%%%%%%%%%%%%%%%%%%%%
		\paragraph{Example 2: Polynomials.}
		\label{example2}
			We consider the set $M$ of all polynomials from $\RR$ to $\RR$.
			In order to define a diffeology on $M$, we  first need to equip this set with a topological structure.
			%For this purpose, we need the following definitions.
			\begin{definition}[Final topology]
				Let a set $X$, a family of topological spaces $X_i$ and functions $f_i\colon  X_i \rightarrow X$ be given. The final topology on $X$ is the finest topology such that the function $f_i$ are continuous.
			\end{definition}

			For simplicity, we denote
			the  set of polynomials of degree  lower or equal $n$  by
			\begin{align*}
			M_n :=\mathrm{Poly}_n(\mathbb{R})  \coloneqq \{ P \in M \mid \mathrm{deg}(P) \leq n \} \subset M,
			\end{align*}
			where $\text{deg}(P)$ denotes the degree of a polynomial $P$.
			
			We equip M with the final topology through the natural inclusions $f_i\colon M_i \to M.$
			We also know that $M_n \subset M \subset C^{\infty}(\RR)$ and, thus, $M_n$ and $M$ are naturally equipped with the subset diffeology inhered by the functional diffeology on $\Cinf(\RR)$.
			In \cite{Iglesias}, we find the following result, which we use to obtain the tangent space of $ M $.

			\begin{lemma}
				\label{le:M_n=RN}
				If $M_n = \poly_n (\RR)$ is equipped with the functional diffeology, there exists a diffeomorphism between $M_n$ and $\RR^{n+1}$. In other words, 
				\begin{align*}
				M_n \simeq \RR^{n+1}
				\end{align*}
				in a diffeological sense.
			\end{lemma}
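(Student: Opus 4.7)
The plan is to exhibit the obvious coordinate map $\Phi\colon \RR^{n+1}\to M_n$, $(a_0,\dots,a_n)\mapsto [x\mapsto \sum_{i=0}^n a_i x^i]$ and its inverse $\Psi\colon M_n\to \RR^{n+1}$ that reads off the coefficients of a polynomial, and then verify that both maps are smooth in the diffeological sense. Set-theoretic bijectivity is immediate from the uniqueness of the coefficients of a polynomial, so everything reduces to the two smoothness checks. Recall that, since $M_n$ inherits its diffeology as a subspace of $C^\infty(\RR)$ with the functional diffeology, a parametrization $q\colon U_q\to M_n$ is a plot if and only if the associated two-variable map $(u,x)\mapsto q(u)(x)$ is smooth from $U_q\times\RR$ to $\RR$ in the usual Euclidean sense.

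For the smoothness of $\Phi$, I would take an arbitrary plot $p\colon U_p\to\RR^{n+1}$, write it in components as $p(u)=(p_0(u),\dots,p_n(u))$ with each $p_i\in C^\infty(U_p,\RR)$, and form $(\Phi\circ p)(u)(x)=\sum_{i=0}^n p_i(u)\,x^i$. This is manifestly smooth as a function of $(u,x)\in U_p\times\RR$, so $\Phi\circ p$ is a plot of $M_n$, which is exactly the smoothness requirement.

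For the smoothness of $\Psi$, let $q\colon U_q\to M_n$ be an arbitrary plot, so that $F(u,x):=q(u)(x)$ is smooth on $U_q\times\RR$. The $i$-th coefficient of the polynomial $q(u)$ can be recovered by the Taylor formula as $a_i(u)=\tfrac{1}{i!}\,\partial_x^{\,i}F(u,x)\big|_{x=0}$. Since $F$ is smooth in both variables, each partial derivative $\partial_x^{\,i}F$ is smooth on $U_q\times\RR$, and evaluation at $x=0$ yields a smooth map $U_q\to\RR$. Hence $(\Psi\circ q)(u)=(a_0(u),\dots,a_n(u))$ is smooth into $\RR^{n+1}$, showing that $\Psi$ is smooth.

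The only delicate point is the second direction: a plot into $M_n$ is only guaranteed to be jointly smooth in $(u,x)$, and one has to argue that the coefficient functions $a_i(u)$ inherit this smoothness. The Taylor argument above settles this cleanly; alternatively, one could evaluate at $n+1$ distinct points $x_0,\dots,x_n$ and invert the (constant) Vandermonde matrix, so that $a_i(u)=\sum_{j=0}^n c_{ij}\,F(u,x_j)$ with fixed coefficients $c_{ij}$, which visibly depends smoothly on $u$. Either route completes the proof that $\Phi$ and $\Psi$ are mutually inverse diffeomorphisms, and hence $M_n\simeq\RR^{n+1}$ as diffeological spaces.
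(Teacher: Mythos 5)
Your argument is correct and complete: the coefficient map and its inverse are checked against the characterization of plots of the functional (and subset) diffeology, and the Vandermonde/Taylor extraction of the coefficients settles the only delicate direction. The paper itself does not prove this lemma but cites it from the literature, where the standard argument is essentially the one you give, so nothing further is needed.
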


			Since $\dal (f) = \tfrac{\d}{\d t}(f(\alpha(t)))_{\mid_{t=0}}$ for $f \in \Cinf(M)$, we investigate $\alpha$ in a neighborhood at $0 \in \RR$.
				Let  $ K \subset \RR$ be compact with $0 \in K$. Then, $\alpha_{\mid_K}\colon  K \ra M$ is a compact subset of $M$ because $\alpha_{\mid_K}$ is continuous. 
				In the following, we consider compact subsets of $M$.

			\begin{lemma}
				\label{le:cpt_subsets_in_M}
				Let $C$ be a compact subset of $M$. Then, there exists an $n \in \NN$ such that $C \subset M_n$.
			\end{lemma}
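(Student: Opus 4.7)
The plan is to argue by contradiction, leveraging the fact that the final topology on $M$ reduces closedness of a subset to closedness of each of its intersections with the $M_n$, together with the Hausdorffness of $M_n \simeq \RR^{n+1}$ provided by Lemma~\ref{le:M_n=RN}.

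First, I would suppose that the conclusion fails, i.e., that for every $n \in \NN$ there exists $P \in C$ with $\mathrm{deg}(P) > n$. Choosing such a $P$ inductively, I obtain a countably infinite set $S = \{P_k : k \in \NN\} \subset C$ with strictly increasing degrees $\mathrm{deg}(P_1) < \mathrm{deg}(P_2) < \cdots$.

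The key step is to show that \emph{every} subset $T \subseteq S$ is closed in $M$. By the definition of the final topology with respect to the inclusions $f_n \colon M_n \to M$, a set $F \subseteq M$ is closed in $M$ if and only if $f_n^{-1}(F) = F \cap M_n$ is closed in $M_n$ for every $n$. Since the degrees of the $P_k$ increase strictly, $T \cap M_n$ is finite for each $n$; as $M_n \simeq \RR^{n+1}$ is Hausdorff by Lemma~\ref{le:M_n=RN}, finite subsets are closed in $M_n$. Hence every subset of $S$ is closed in $M$.

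Taking $T = S$ shows that $S$ is closed in $M$, and taking $T$ arbitrary shows that the subspace topology on $S$ is discrete. Since $C$ is compact and $S \subseteq C$ is closed in $M$, $S$ is compact. But an infinite discrete space cannot be compact, a contradiction. Thus there must exist $n \in \NN$ with $C \subseteq M_n$. The only delicate point is the invocation of the final topology: one needs to be confident that the topology used on each $M_n$ in the colimit description is the standard one coming from $\RR^{n+1}$ under the diffeomorphism of Lemma~\ref{le:M_n=RN}, so that finite sets really are closed in $M_n$. Once this is granted, the remainder of the argument is the standard closed-embedding colimit argument and goes through routinely.
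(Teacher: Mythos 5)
Your proof is correct, but it follows a genuinely different route from the paper's. You use the classical colimit argument: assuming the conclusion fails, you extract an infinite set $S\subset C$ of polynomials of strictly increasing degree, observe that every subset of $S$ meets each $M_n$ in a finite (hence closed, by Hausdorffness of $M_n\simeq\RR^{n+1}$) set, conclude from the final-topology characterization of closed sets that $S$ is an infinite closed discrete subspace of the compact set $C$, and derive a contradiction. The paper instead argues directly with open covers: it introduces the sets $\mathcal{U}_r=\{x\in M\mid |x_n|<r\}$ to bound the coefficients of elements of $C$ uniformly, sets $\chi(n)=\sup_{x\in C}|x_n|$, assumes an infinite increasing sequence $(\eta_i)$ with $\chi(\eta_i)>0$ (the negation of the claim), and builds a second nested open cover $\mathcal{V}_r$ whose finite subcover forces $|x_{\eta_j}|<\tfrac12\sup_{x\in C}|x_{\eta_j}|$ for all $x\in C$ and large $j$, contradicting the definition of the supremum. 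Both arguments hinge on the same structural facts — that openness (respectively closedness) in the final topology is tested on each $M_n$, and that the topology on $M_n$ is the standard one on $\RR^{n+1}$ — and you are right to flag that as the one delicate point; it is supplied by Lemma~\ref{le:M_n=RN} together with the $D$-topology of the fine diffeology on $\RR^{n+1}$ being the Euclidean topology. What your approach buys is brevity and standardness (it is the usual proof that a compact subset of an increasing union of closed $T_1$ stages lies in a single stage) and it only needs points to be closed in each $M_n$; the paper's covering argument avoids any appeal to closed discrete subspaces and works straight from the open-cover definition of compactness, at the cost of a more intricate construction. Your proposal stands as a complete and cleaner alternative proof.
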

			
			\begin{proof}
				Let $x \in M$. Then, $x$ is given by
				\begin{align*}
					x(y) = \sum_{n \in \NN_0}x_ny^n,
				\end{align*}
				where $ (x_n)_{n \in \NN}$ denotes the sequence of all coefficients of $x$.
				We define the sets
				\begin{align*}
				 	\mathcal{U}_r  \coloneqq \{ x \in M \mid   |x_n| < r\quad \forall n \in M\}
				\end{align*}
				which form an open covering of $M$ with $\mathcal{U}_n \subset \mathcal{U}_m$ for $m > n$.
				Since $ C $ is compact, there exists an $R \in \NN$ such that 
				\begin{align*}
					C \subset \bigcup_{r \geq 0}^R \mathcal{U}_r = \mathcal{U}_R.
				\end{align*}	 
				We also define the map
				\begin{align*}
					\chi\colon  \NN \ra \RR^+,\, n \mapsto \sup_{x \in C} \abs{x_n}.
				\end{align*}
				Then, we get %By the definition of $\chi$ and the fact that $C \subset \mathcal{U}_R$, we have
				\begin{align*}
					0 \leq \chi(n) \leq R \qquad \forall \, n \in \NN
				\end{align*}
				because $C \subset \mathcal{U}_R$.
				
				Now, we assume that there exists a sequence $(\eta_n)_{n \in \NN}$ such that $\eta_i < \eta_{i+1}$ and $0 < \chi(\eta_i) $ for all $i \in \NN$.
				We define
				\begin{align*}
					 \mathcal{V}_r  \coloneqq \{ x \in M \mid | x_{\eta_j} | < \tfrac{1}{2}\chi(\eta_j)\quad  \forall j > r  \}
				\end{align*}
				which also form an open covering of $M$ and satisfy $\mathcal{V}_n \subset \mathcal{V}_m$ for $m> n$.
				In a consequence, we know that there exists an $R$ such that $K \subset \mathcal{V}_R$.  
				Thus,
				\begin{align*}
					| x_{\eta_j} | < \tfrac{1}{2} \chi(\eta_j) = \tfrac{1}{2} \sup_{x \in C} |x_{\eta_j}| \qquad \forall \, j > R \text{ and } \forall \, x \in C.
				\end{align*}
				This contradicts to the definition of the supremum.
				We deduce that there exists an $n \in \NN$ such that for all $x \in C$ we have $\mathrm{deg}(x) \leq n$. 
			\end{proof}

				With the notation
				\begin{align*}
					F_n  	&\coloneqq \{ \alpha_{\mid_K}\colon K \ra \RR^n \mid \alpha \in \sfrac{ \pathsx(M)}{\sim} \}
				\end{align*}
				for a compact subset $K$ of $\RR$ with $0 \in K^{\circ}$
			we get the following result for the tangent cone of the set $M$ of all polynomials from $\RR$ to $\RR$.
			\begin{lemma}
				\label{le:$C_xM = U(...|Tn)$}
				The identity $C_xM =  \bigcup_{n > \mathrm{deg}(x)} \{ \dal \mid \alpha \in F_n \}$ holds.
			\end{lemma}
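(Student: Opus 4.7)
The plan is to prove the two inclusions separately, with the substantive direction being $\subseteq$. The inclusion $\supseteq$ is essentially by definition: any $\alpha \in F_n$ is (a restriction of) a representative of an equivalence class in $\sfrac{\pathsx(M)}{\sim}$, so the associated path derivative $\d_\alpha$ lies in $C_xM$ by definition~\ref{def:CxX}. The only point to verify is that the path derivative $\d_\alpha$ in the sense of \eqref{PathDerivative} depends only on the germ of $\alpha$ at $0$, so that restricting $\alpha$ to the compact set $K$ (with $0 \in K^{\circ}$) does not change the derivative. This follows immediately from the fact that the right-hand side of \eqref{PathDerivative} is evaluated at $u=0$.

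For the inclusion $\subseteq$, the plan is to start with $\d_\alpha \in C_xM$ for some $\alpha \in \pathsx(M)$ and produce an $n > \deg(x)$ with $\alpha_{\mid_K} \in F_n$ (after the diffeomorphic identification in lemma~\ref{le:M_n=RN}). First I would fix any compact subset $K \subset \RR$ containing $0$ in its interior. Since $\alpha\colon \RR \to M$ is smooth and hence continuous with respect to the $D$-topology, the image $\alpha(K)$ is a compact subset of $M$. Applying lemma~\ref{le:cpt_subsets_in_M} produces an integer $m$ such that $\alpha(K) \subset M_m$. Replacing $m$ by $n  \coloneqq \max\{m, \deg(x)+1\}$ guarantees $n > \deg(x)$ while preserving $\alpha(K) \subset M_n$.

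Next I would invoke lemma~\ref{le:M_n=RN} to identify $M_n$ diffeologically with $\RR^{n+1}$, so that the restriction $\alpha_{\mid_K}$ is a smooth map into $\RR^{n+1}$ in the required sense, i.e., an element of $F_n$ (up to the indexing convention). Since the path derivative depends only on the germ of $\alpha$ at $0$ and $K$ is a neighborhood of $0$, the element $\d_\alpha \in C_xM$ coincides with the path derivative of $\alpha_{\mid_K}$, yielding the claimed membership in $\bigcup_{n>\deg(x)} \{\d_\alpha \mid \alpha \in F_n\}$.

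The main obstacle I anticipate is the bookkeeping around lemma~\ref{le:M_n=RN}: the set $F_n$ is defined using codomain $\RR^n$, while the natural identification of $M_n$ is with $\RR^{n+1}$. This is a notational discrepancy that must be reconciled (most likely by interpreting $F_n$ via the diffeomorphism of lemma~\ref{le:M_n=RN}) before the argument closes cleanly; otherwise the proof is a direct combination of continuity of $\alpha$, compactness, and lemma~\ref{le:cpt_subsets_in_M}.
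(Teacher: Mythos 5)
Your proposal is correct and follows essentially the same route as the paper's proof: restrict $\alpha$ to a compact neighborhood $K$ of $0$ (justified because the path derivative only depends on the germ at $0$), use continuity to get compactness of $\alpha(K)$, and invoke Lemma~\ref{le:cpt_subsets_in_M} to land in some $M_n$, identified with a finite-dimensional Euclidean space via Lemma~\ref{le:M_n=RN}. Your explicit handling of the constraint $n>\deg(x)$ via the maximum and your flagging of the $\RR^n$ versus $\RR^{n+1}$ indexing mismatch in the definition of $F_n$ are both points the paper glosses over, but they do not change the substance of the argument.
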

			
			\begin{proof}
					For $\d_{\alpha}$ with $\alpha \in \sfrac{\pathsx(M)}{\sim}$ it is only relevant how $\alpha$ behaves at $0$.
					Therefore, we consider a compact set $K \subset \RR$ such that $0 \in K^{\circ}$ and restrict $\alpha$ to $K$.
					This leads to
					\begin{align*}
						C_xM 	= \{\d_{\alpha_{\mid_K}} \mid \ \alpha \in \sfrac{\pathsx(M)}{\sim}\}.
					\end{align*}
					Since $\alpha$ is continuous, we obtain that $\alpha(K)$ is compact.
					We can identify $\alpha_{\mid_K}$  by a map from $K \subset \RR$ into $\RR^n$ for some $n \in \NN$ due to lemma~\ref{le:cpt_subsets_in_M}.
					Thus, we get
					\begin{align*}
						C_xM = \{\d_{\alpha} \mid \alpha \in F\},
					\end{align*}
					where
					$F 		\coloneqq \bigcup_{n > \mathrm{deg}(x)} F_n$.
			\end{proof}
			
				In the proof of lemma~\ref{le:$C_xM = U(...|Tn)$}, we notice that $\alpha_{\mid_K} \colon K \to M$ can bee seen as a map $\alpha_{\mid_K} \colon K \to \RR^n$ for some natural number $n$. 
				In consequence, we are able to consider the Euclidean derivative of $\alpha_{\mid_K}$ near zero.
				This consideration leads to the next  lemma.
			
			\begin{lemma}
				\label{le:(...|Tn) = Rn}
					The set $\{ \dal \mid \alpha \in F_n \}$ consists of directional derivatives, i.e.,
					\begin{align*}
						\{ \dal \mid \alpha \in F_n \} \simeq \RR^{n}.
					\end{align*}
			\end{lemma}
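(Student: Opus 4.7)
The plan is to identify a path $\alpha \in F_n$ with its Euclidean derivative $\alpha'(0) \in \RR^n$, show that this identification only depends on $\d_\alpha$ and not on the chosen representative, and then verify that the resulting bijection is a diffeomorphism.

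First, I would exploit lemma~\ref{le:M_n=RN}: once a compact neighborhood $K$ of $0$ is fixed and $\alpha \in F_n$ is viewed as a smooth map $K \to M_n \simeq \RR^n$, all objects live in the Euclidean category. For any $f \in C^\infty(M,\RR)$ the restriction $f_{\mid_{M_n}}$ is smooth in the Euclidean sense (since the inclusion $M_n \hookrightarrow M$ is smooth), so the chain rule gives
\begin{equation*}
\d_\alpha(f) = \frac{\partial}{\partial u}\bigl(f(\alpha(u))\bigr)_{\mid_{u=0}} = Df_{\mid_{M_n}}(x) \cdot \alpha'(0).
\end{equation*}
This makes $\d_\alpha$ depend on $\alpha$ only through the vector $\alpha'(0) \in \RR^n$, which suggests the candidate map $\Phi\colon \{\d_\alpha \mid \alpha \in F_n\} \to \RR^n$, $\d_\alpha \mapsto \alpha'(0)$.

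Next I would check that $\Phi$ is a well-defined bijection. Well-definedness: if $\d_\alpha = \d_\beta$, then testing against the coordinate functions $f_i \in C^\infty(M,\RR)$ that read off the first $n$ coefficients of a polynomial (these extend the coordinate projections $M_n \to \RR$) yields $\alpha'(0) = \beta'(0)$. Injectivity is then immediate by the chain rule formula above. Surjectivity follows by taking, for any $v \in \RR^n$, the affine path $\alpha(t) := x + tv \in M_n \subset M$, which clearly lies in $F_n$ and satisfies $\alpha'(0) = v$.

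Finally I would verify that $\Phi$ is a diffeomorphism with respect to the diffeology induced from $C_xM$ (subsection~\ref{subsection:DiffTangentSpace}). A plot $p\colon U_p \to \{\d_\alpha \mid \alpha \in F_n\}$ is, locally, of the form $u \mapsto \d_{q(u)}$ for a plot $q$ of $\sfrac{\mathrm{Paths}_x(M)}{\sim}$ landing in $F_n$. By the functional diffeology on paths and lemma~\ref{le:M_n=RN}, this is equivalent to the smoothness of $(u,t) \mapsto q(u)(t) \in \RR^n$, which in turn forces $u \mapsto q(u)'(0) = \Phi(p(u))$ to be smooth in the Euclidean sense; the converse direction is the same chain of equivalences read backwards, using the affine paths from the surjectivity argument to lift any smooth $\RR^n$-valued plot.

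The main obstacle is the last step: the source space carries a diffeology that is two quotients deep (first the equivalence $\sim$, then the definition of the tangent-cone diffeology), so one must be careful to use the universal properties of these quotients rather than manipulating representatives directly. Once the smoothness of the evaluation of plots of $C^\infty(\RR,M)$ and the Euclidean identification $M_n \simeq \RR^n$ are combined, the argument reduces to the familiar fact that directional derivatives of Euclidean paths at a fixed point recover exactly the tangent space $\RR^n$.
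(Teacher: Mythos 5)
Your proposal is correct and follows essentially the same route as the paper: both identify $\d_\alpha$ with $\alpha'(0)\in\RR^n$, establish surjectivity via the affine paths $t\mapsto x+tv$, and verify smoothness in both directions by unwinding plots of the tangent cone to plots $u\mapsto \d_{q(u)}$ and passing to $u\mapsto q(u)'(0)$. Your explicit well-definedness check (testing $\d_\alpha=\d_\beta$ against coordinate functionals to conclude $\alpha'(0)=\beta'(0)$) is in fact a cleaner justification of injectivity than the one the paper gives, but it does not change the substance of the argument.
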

			
			\begin{proof}
				
				We define 
				\begin{align*}
				\phi\colon   \RR^n \ra \{ \dal \mid \alpha \in F_n \},\, v \mapsto \d_{[t \mapsto tv + x]}
				\end{align*}
				and show that $\phi$ is a diffeomorphism, i.e., 
				$\phi$ is bijective and both, $\phi$ and $\phi^{-1}$, are smooth.

						\emph{Bijectivitiy of $\phi$.} 	
							The map $\phi$ is surjective 
								due   lemma \ref{le:$C_xM = U(...|Tn)$} and the fact that $ \alpha \sim [t \mapsto t \alpha '(0) + x] $ holds. 
						Moreover, $\phi$ is injective because $\phi^{-1}\colon  \dal \mapsto \alpha'(0)$ is  surjective.
						
						\emph{Smoothness of $\phi$.}
							 	The map $\phi$ is obviously linear. 
							 	Thus, $ \phi $ is smooth by equation (\ref{eq:sm_lin=lin}) since $ \RR^n $ is fine (cf.~\cite{Iglesias}).
						 
						\emph{Smoothness of $\phi^{-1}$.} 
							Let $P\colon  U_P \ra \{ \dal \mid \alpha \in F_n \}$ be a plot of $\{ \dal \mid \alpha \in F_n \}$. By the subset diffeology, $P$ is a plot of $C_xM$ with values in $\{ \dal \mid \alpha \in F_n \}$. Thus, $P$ is locally equal to $\d_Q$ for a plot $Q$ of $F_n$.
								We identify $\dal \in C_xM$ with $\d_{[tv+x]}$ for $v = \alpha'(0)$. Then, for every $Q(u)$, there exists a vector $v_u = (Q(u))'(0)$ such that 
								\begin{align*}
									\d_{Q(u)} = \d_{[t \mapsto tv_u + x]}.
								\end{align*}
								Since $U_q \rightarrow M,\ u \mapsto Q(u)$ is smooth, the map $\RR \rightarrow M, t \mapsto tv_u +x$  is also smooth.
								Due to the equality
								\begin{align*}
									[u \mapsto v_u] = [u \mapsto \phi^{-1}(\d_{Q(u)})]
								\end{align*}
								the map $u \mapsto \phi^{-1}(\d_{Q(u)})$ is smooth.
								We obtain the smoothness of $\phi^{-1}$ because $u \mapsto Q(u)$ is a plot $\{ \dal \mid \alpha \in F_n \}$.
			\end{proof}

			We obtain the following result for the tangent cone  by applying lemma~\ref{le:(...|Tn) = Rn}.
			\begin{lemma}
				$C_xM \simeq \mathbb{R}^{\infty}$ holds for the tangent cone of the set $M$ of all polynomials from $\RR$ to $\RR$.
			\end{lemma}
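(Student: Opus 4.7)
The plan is to combine the two preceding lemmas to exhibit $C_xM$ as a union of nested pieces each diffeomorphic to a Euclidean $\RR^n$, and then identify this union diffeologically with $\RR^{\infty}$.

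First I would unpack what $\RR^{\infty}$ should mean here. Define $\RR^{\infty}\coloneqq\bigcup_{n\in\NN}\RR^{n}$ with respect to the inclusions $\iota_{n}\colon\RR^{n}\hookrightarrow\RR^{n+1}$, $(v_{1},\dots,v_{n})\mapsto(v_{1},\dots,v_{n},0)$, and equip it with the sum/colimit diffeology inherited from this chain, i.e.\ a parametrization $p\colon U_{p}\to\RR^{\infty}$ is a plot if locally it factors through some $\RR^{n}$ as a smooth map in the Euclidean sense.

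Next I would assemble the diffeomorphism from the explicit maps of lemma~\ref{le:(...|Tn) = Rn}. For every $n>\mathrm{deg}(x)$ the lemma gives a diffeomorphism
$$\phi_{n}\colon\RR^{n}\longrightarrow\{\dal\mid\alpha\in F_{n}\},\qquad v\mapsto \d_{[t\mapsto tv+x]}.$$
These maps are compatible with the inclusions on both sides: on the right, the subset diffeology makes $\{\dal\mid\alpha\in F_{n}\}\subset\{\dal\mid\alpha\in F_{n+1}\}$ an induction; on the left, $\iota_{n}$ is the standard inclusion. Because $\phi_{n+1}\circ\iota_{n}=\phi_{n}$ (adding a zero component to $v$ produces the same straight-line path), the family $(\phi_{n})_{n>\mathrm{deg}(x)}$ glues to a well-defined map
$$\Phi\colon\RR^{\infty}\longrightarrow C_{x}M,\qquad v\mapsto\d_{[t\mapsto tv+x]}.$$
Bijectivity then follows from lemma~\ref{le:$C_xM = U(...|Tn)$}, which writes $C_{x}M$ as the union $\bigcup_{n>\mathrm{deg}(x)}\{\dal\mid\alpha\in F_{n}\}$, together with the bijectivity of each $\phi_{n}$.

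For the diffeological part, smoothness of $\Phi$ follows from the colimit diffeology on $\RR^{\infty}$: any plot of $\RR^{\infty}$ locally factors through some $\RR^{n}$, where $\Phi$ agrees with the smooth $\phi_{n}$. Smoothness of $\Phi^{-1}$ uses that a plot of $C_{x}M$ is locally of the form $\d_{Q}$ for a plot $Q$ of $\sfrac{\mathrm{Paths}_{x}(M)}{\sim}$, whose image is (after restriction to a compact neighbourhood of $0$) contained in some $M_{n}$ by lemma~\ref{le:cpt_subsets_in_M}, so that $\Phi^{-1}\circ\d_{Q}$ coincides locally with $\phi_{n}^{-1}\circ\d_{Q}$, which is smooth by lemma~\ref{le:(...|Tn) = Rn}. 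I expect the main obstacle to be precisely this last step: guaranteeing that a plot with values in $C_{x}M$ locally factors through some $\{\dal\mid\alpha\in F_{n}\}$ with a uniform $n$ on a neighbourhood, which is what allows the compatibility of the local inverses $\phi_{n}^{-1}$ and yields a globally smooth $\Phi^{-1}$; this in turn rests on a careful use of the compactness argument of lemma~\ref{le:cpt_subsets_in_M} applied to the image of the plot under consideration.
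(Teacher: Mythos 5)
Your proposal is correct and follows essentially the same route as the paper, which presents this lemma as an immediate consequence of the decomposition $C_xM=\bigcup_{n>\mathrm{deg}(x)}\{\dal\mid\alpha\in F_n\}$ and the piecewise diffeomorphisms $\phi_n$ onto $\RR^n$; you merely spell out the colimit structure on $\RR^{\infty}$, the compatibility $\phi_{n+1}\circ\iota_n=\phi_n$, and the local factorization of plots through a finite level, all of which are consistent with the paper's (unwritten) argument.
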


			Since the tangent cone is obviously a diffeological vector space, we obtain with definition~\ref{def:CxX} and definition~\ref{def:TxX} the tangent space of the set of all polynomials from $\RR$ to $\RR$.
			\begin{theorem}
				\label{TxM}
				The tangent space at $x$ in $M$ is given by
				\begin{align*}
				T_xM \simeq C_xM \simeq \RR^{\infty}.
				\end{align*}
			\end{theorem}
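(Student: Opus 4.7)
The plan is to observe that the heavy lifting has already been done by the preceding lemma, which shows $C_xM \simeq \RR^{\infty}$ via the identification of paths with their derivative vectors. Since $T_xM$ is defined in~(\ref{TangentSpace}) as $\mathrm{span}(C_xM)$ equipped with the weak diffeology, the remaining content is to argue that, in this particular case, the span operation does not enlarge the tangent cone — neither as a set nor as a diffeology. Both diffeomorphisms in the statement then follow by composing the identifications.

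First I would unpack the statement of the previous lemma: the diffeomorphism $\phi\colon\RR^{n}\to\{\d_\alpha\mid\alpha\in F_n\}$ from lemma~\ref{le:(...|Tn) = Rn} is explicitly linear, and the tangent cone $C_xM$ is the direct limit $\bigcup_{n>\deg(x)}\{\d_\alpha\mid\alpha\in F_n\}$ from lemma~\ref{le:$C_xM = U(...|Tn)$}. Hence $C_xM$ is the direct limit of linear subspaces under compatible linear maps, and it inherits a compatible $\RR$-vector space structure identified with $\RR^{\infty}$. In particular, for any $\d_\alpha,\d_\beta\in C_xM$ represented by $\alpha\in F_n$ and $\beta\in F_m$, the linear combination $\lambda\d_\alpha+\mu\d_\beta$ is again of the form $\d_\gamma$ for $\gamma(t)=t(\lambda\alpha'(0)+\mu\beta'(0))+x\in F_{\max(n,m)}$. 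Thus $C_xM$ is already closed under the vector space operations inherited from $\RR^{\infty}$.

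Next I would verify that these operations are smooth in the tangent cone diffeology, so that $C_xM$ is in fact a diffeological $\RR$-vector space. Given two plots $p_1,p_2$ of $C_xM$, locally they are of the form $\d_{q_i}$ for plots $q_i$ of $\sfrac{\mathrm{Paths}_x(M)}{\sim}$, and via the linear identification with $\RR^{\infty}$ (factored through some $\RR^{n_i}$ by lemma~\ref{le:cpt_subsets_in_M}) the sum and scalar multiplication reduce to the standard smooth operations on $\RR^{n}$. This is exactly the situation invoked in the sentence preceding the theorem: once $C_xM$ is a diffeological vector space, it is its own span.

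Finally, to conclude, I would argue that the weak diffeology on $\mathrm{span}(C_xM)$ coincides with the tangent cone diffeology on $C_xM$. One inclusion is the universal property quoted in the paper, namely $D_{C_xM}\subset D_{\mathrm{span}(C_xM)}$. The reverse inclusion follows because every finite sum $\sum_i\lambda_i(u)p_i(u)$ of plots $p_i$ of $C_xM$ is, by the smoothness of the vector space operations just established, again a plot of $C_xM$. This yields $T_xM=C_xM$ as diffeological spaces, and combining with $C_xM\simeq\RR^{\infty}$ from the previous lemma gives the chain of diffeomorphisms $T_xM\simeq C_xM\simeq\RR^{\infty}$. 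The main obstacle is precisely the diffeological bookkeeping in this last step — set-theoretic equality of $\mathrm{span}(C_xM)$ and $C_xM$ is easy, but one must be careful that the weak diffeology is not strictly finer than the tangent cone diffeology, which is where the explicit linear identification provided by lemma~\ref{le:(...|Tn) = Rn} is essential.
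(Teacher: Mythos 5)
Your proposal is correct and follows essentially the same route as the paper: the paper simply invokes the preceding lemma $C_xM\simeq\RR^{\infty}$ and the observation that the tangent cone is ``obviously a diffeological vector space,'' hence equal to its own span. Your write-up merely makes explicit the details the paper leaves implicit — closure of $C_xM$ under linear combinations via the identification $\d_{\alpha}=\d_{[t\mapsto t\alpha'(0)+x]}$, smoothness of the operations, and the agreement of the weak diffeology on the span with the tangent cone diffeology.
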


		%%%%%%%%%%%%%%%%%%%%%%%%%%%%%%%%%%%%%%%%%%%%%%%%%%%%%%%%%%%%%%%%%%%%%%%%%%%%%%%%%%%%%%%%%%%%%%%%%%%%%%%%%%%%%%%%%%%%%%%%%%%%%%%%%%%%%%%%%%%%%%%%%%%%%%%%%%%%%%%%%%%%%%%%%%%%%%%%%%%%%%%%%%%%%%%%%%%%%%%%%%%%%%%%%%%%%%%%%%%%%%%%%%%%%%%%%%%%%%%%%%%%%%%%%%%%%%%%%%%%%%%%%%%%%%%%%%%%%%%%%%%%%%%%%%%%%%%%%%%%%%%%%%%%%%%%%%%%%%%%%%%%%%%%%%%%%%%%%%%%%%%%%%%%%%%%%%%%%%%%%%%%%%%%%%%%%%%%%%%%%%%%%%%%%%%%%%%%%%%%%%%%%%%%%%%%%%%%%%%%%%%%%%%%%%%%%%%%%%%%%%%%%%%%%%%%%%%%
		\subsubsection{Diffeological Riemannian space}
		\label{subsec:DiffRiemSpace}
	
			The diffeological Riemannian space is very important for generalizing optimization techniques on manifolds.
			For example, a diffeological gradient is necessary for formulating the steepest descent method on a diffeological space.
			In order to define a diffeological Riemannian space and a diffeological gradient, we first need to generalize the concepts of \emph{vector} and \emph{tangent bundles} from manifolds to diffeological spaces.
			
			First, we clarify the meaning of a \emph{bundle}, \emph{fiber} and \emph{pre-bundle} in a diffeological setting  (cf. \cite{pervova}).
			For a diffeological space $B$, a \emph{bundle} over $B$ is a diffeological space $E$ together with a subduction $\pi\colon  E \ra B$.
			We consider the bundle $\pi \colon E \to B$. The \emph{fiber} over $b \in B$ is the set $E_b\coloneqq\pi^{-1}(b)$.
			Let $(E_b)_{b \in B}$ be a collection of sets. Let us consider $E  \coloneqq \cup_{b \in B}E_b$ and the map $\pi\colon  E \ra B$ such that $\pi(E_b) = b$.
			We call $(E,B,\pi)$ a \emph{pre-bundle} with fibers $E_b$.
			A \emph{bundle diffeology} $D_E$ for $E$ is any diffeology, such that $\pi^*(D_E) = D_B$.
			
			Now, we are able to define a diffeological vector bundle and a weak vector bundle diffeology.
				A diffeological bundle, where each fiber has the structure of a diffeological vector space, is called \emph{diffeological vector bundle}.
			If $(E,B,\pi)$ denotes a pre-bundle, where $B$ denotes a diffeological space and the bundle diffeology is denoted by $D_E$, the \emph{weak vector bundle diffeology} on $E$ is generated by plots of the form
				\begin{align*}
					u \mapsto \sum_{i=1}^n \lambda_i(u) p_i(u)
				\end{align*}
				for $n \in \NN$, $\lambda\colon  U \ra \RR$ smooth and $p_i \in D_E$ such that $\pi \circ p_i = \ldots = \pi \circ p_n$.
				As mentioned, e.g., in \cite{vincent}, the weak vector bundle diffeology is the finest vector bundle diffeology generated by $D_E$.
			
			In order to define a diffeological  \emph{tangent bundle} 
				of a diffeological space $X$ we consider the \emph{tangent cone bundle} of $X$.
			It is defined as the union of all tangent cones, i.e.,
			\begin{align}
			\label{tangentcone}
				CX  \coloneqq \bigcup_{x \in X}C_xX.
			\end{align}
			Let $D_{CX}$ be the set of parametrizations $p\colon U_p \ra CX$ of $ CX $ which are locally of the form $u \mapsto \d_{\gamma(u)}$ for a smooth  $\gamma\colon  U \ra \paths(X)$. 
			Then, the set $D_{CX}$ is a \emph{bundle diffeology} for $CX$.
			Since we have the diffeological tangent cone bundle $(CX,D_{CX})$, we are able to define the diffeological tangent bundle $(TX,D_{TX})$.
				The \emph{diffeological tangent bundle} of a diffeological space $X$
				is given by
				\begin{align*}
					TX  \coloneqq \bigcup_{x \in X}T_xX.
				\end{align*}
				Together with the weak vector bundle diffeology generated by $D_{CX}$ denoted by $D_{TX}$, $(TX,D_{TX})$ is a diffeological vector bundle (cf., e.g., \cite{vincent}).
			The generating plots of $D_{TX}$ are given by
				\begin{align*}
					U \to TX,\  u \mapsto \sum_i^n\lambda_i(u)\d_{\gamma_i(u)},
				\end{align*}
				where $n \in \NN$, $\lambda_i\colon  U \ra \RR$ smooth and $\gamma_i\colon  U \ra \pathsx(X)$ with $\gamma_1(u)(0)= \ldots = \gamma_n(u)(0)$ .
			%%%%%%%%%%%%%%%%%%%%%%%%%

			Now, we have all tools that we need to define a diffeological version of a Riemannian space with which we are able to specify the diffeological gradient.
			We start with the definition of a \emph{diffeological Riemannian space} and a \emph{diffeological Riemannian metric}.
			\begin{definition}[Diffeological Riemannian space]
				Let $X$ be a diffeological space. We say $X$ is a {diffeological Riemannian space} if there exists a smooth map 
				\begin{align*}
					g\colon  X \ra \mathrm{Sym}(TX,\RR),\, x \mapsto g_x
				\end{align*}
				such that
				\begin{align*}
					g_x\colon  T_xX \times T_xX \ra \RR
				\end{align*}
				is smooth, symmetric and positive definite.
				Then, we call the map $g$ a diffeological Riemannian metric.
			\end{definition}

			We define a gradient similar to gradients on Riemannian manifolds.
			\begin{definition}[Diffeological gradient]
				Let $X$ be a diffeological Riemannian space. The diffeological gradient $ \operatorname{grad}f $ of a function $f \in \Cinf(X)$ in $x \in X$ 	is defined as the solution of 
				\begin{align*}
					g_x(\operatorname{grad} f, \dal) = \dal(f).
				\end{align*}
			\end{definition}

			We conclude this section with an example.
			We concentrate on our first example, the cross $\mathcal{X}$ defined in (\ref{cross}).
			As seen above, the tangent space of $\mathcal{X}$ at $x\in\mathcal{X}$ is given by
			$\RR^2$
			with the standard diffeology on $\RR^2$.
			That means $P\colon U_P \ra \RR^2$ is a plot if and only if $P$ is smooth in the usual sense.
			This in mind we notice that $T_x\mathcal{X}$ is a diffeological Riemannian space.
			Indeed, the mapping $g_x$ is equal to the usual scalar product on $\RR^2$ for every $x \in \mathcal{X}$.
			 Consequently, $ g_x $ is smooth, symmetric, positive definite.
			 Furthermore it satisfies
			\begin{align*}
				T_x\mathcal{X} \overset{\sim}{\longrightarrow} T_x^*\mathcal{X},\, v \mapsto \langle v,\cdot \rangle.
			\end{align*}
			Furthermore, we notice that 
			\begin{align*}
				g\colon  \mathcal{X} \ra \mathrm{Sym}(T\mathcal{X},\RR),\, x \mapsto g_x
			\end{align*}
			is smooth.
			Thus, we are able to define a gradient on $T_x\mathcal{X}$, which is the usual gradient in the Euclidean space $\RR^2$.

	\subsection{Towards updates of iterates: Diffeological Levi-Civita connection and diffeological retraction}
	\label{subsec:UpdateIterates}
	
		In algorithm~\ref{Algo}, we need the concept of the exponential map and its approximation, the so-called retraction, to locally reduce an optimization problem on a manifold to an optimization problem on its tangent space.
		If we consider Riemannian manifolds, it is not guaranteed that the exponential map is a local diffeomorphism; there might even not to be any exponential map at all  \cite{MilnorDeWittStora}.
		However, for a Riemannian manifold there exists an exponential map if there is a Levi-Civita connection on that manifold (cf.~\cite{HelgasonSigurdur}).
		Thus, we define a diffeological version of a Levi-Civita connection, which should lead---assuming its existence---to a diffeological exponential map.

		\begin{definition}[Levi-Civita connection]
			\label{def:lc_connection}
			Let $X$ be a diffeological space with diffeological Riemannian metric $ g  $, tangent space $ T_xX $ at $ x \in X $ and tangent bundle $ TX $. We define 
				\begin{align*}
			\Gamma(TM) \coloneqq \{ \mathcal{X} \colon X \to TX \mid \mathcal{X}(p)  \in T_pX \text{ for } p \in X \}.
			\end{align*}
			A diffeological Levi-Civita connection of $ X $ is a map
			\begin{align*}
				\nabla: \Gamma(TX) \times \Gamma(TX) \to \Gamma(TX),\ 
			 (\mathcal{X},\mathcal{Y}) \mapsto \nabla_{\mathcal{X}}\mathcal{Y}
			\end{align*}
			with the following properties:
			\begin{itemize}
				\item[(i)]
					For $ f,g \in \Cinf(X),\ \mathcal{X,Y,Z} \in \Gamma(TX)$ the following equation holds:
					\begin{align*}
						\nabla_{fX + gY}Z = f\nabla_{X}Z + g\nabla_{Y}Z
					\end{align*}
				\item[(ii)]
					For $ a,b \in \RR,\ \mathcal{X,Y,} \in \Gamma(TX)$ the following equation holds:
					\begin{align*}
						\nabla_{X}(aY+bZ) = a\nabla_{X}Y + b \nabla_{X}Z
					\end{align*}
				\item[(iii)]
					For $ f \in \Cinf(X),\ \mathcal{X,Y} \in \Gamma(TX)$ the following equation holds for all $p \in X$:
					\begin{align*}
						\left( \nabla_{f\mathcal{X}}\mathcal{Y}\right) (p) 
						= f(p)\left( \nabla_{\mathcal{X}}\mathcal{Y}\right) (p) \
					\end{align*}
				\item[(iv)]
					For $ f \in \Cinf(X),\ \mathcal{X,Y} \in \Gamma(TX)$ the following equation holds for all $p \in X$:
					\begin{align*}
						\left( \nabla_{\mathcal{X}} \left( f\mathcal{Y}\right)  \right)(p)
						= \mathcal{X}(f)\mathcal{Y}(p) + f(p)\left(\nabla_{\mathcal{X}}\mathcal{Y} \right)(p) 
					\end{align*}
				\item[(v)]
					For $ \mathcal{X,Y,Z} \in \Gamma(TX) $ the following equation holds for all $p \in X$:
					\begin{align*}
						\mathcal{Z}(p)(g(\mathcal{X},\mathcal{Y})) 
						= g_p((\nabla_{\mathcal{Z}}\mathcal{Y})(p), \mathcal{Y}(p)) + g_p(\mathcal{X}(p), (\nabla_{\mathcal{Z}}\mathcal{Y})(p))
					\end{align*}
				\item[(vi)]
					For $ f \in \Cinf(X)$ the following equation holds for all $p \in X$:
					\begin{align*}
						(\nabla_{\mathcal{X}}\mathcal{Y}-\nabla_{\mathcal{Y}}\mathcal{X})(f)(p) 
						= \mathcal{X}(p)(\mathcal{Y}(f)) - \mathcal{Y}(p)(\mathcal{X}(f))
					\end{align*}
			\end{itemize}
		Here, $ \mathcal{X}(f) $ and $ \mathcal{Y}(f) $ are given by
			\begin{align*}
				\mathcal{X}(f) \colon M \to \RR, q \mapsto \mathcal{X}(q)(f) \quad \text{ and } 	\mathcal{Y}(f) \colon M \to \RR, q \mapsto \mathcal{Y}(q)(f),
			\end{align*}
			respectively.
		\end{definition}

		In general, the existence of a diffeological Levi-Civita connection is not guaranteed.
		Even on smooth manifolds, a Levi-Civita connection does not need to exist.
		However, we have the following result of uniqueness.

		\begin{lemma}
			Let $X$ be a diffeological Riemannian space with a Riemannian metric $g$ and a Levi-Civita connection $\nabla$.
			Then, $\nabla$ is unique.
		\end{lemma}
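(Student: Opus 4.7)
The plan is to mimic the classical Koszul formula argument: show that conditions (v) and (vi) together pin down $g(\nabla_{\mathcal X}\mathcal Y,\mathcal Z)$ in terms of data that depend only on $g$ and on the sections $\mathcal X,\mathcal Y,\mathcal Z$, not on the connection. Since $g$ is nondegenerate fiberwise, this will force any two Levi-Civita connections to agree.

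More concretely, I would first introduce the diffeological analogue of the Lie bracket. Property (vi) says exactly that, when viewed as a derivation on $C^{\infty}(X)$, the field $\nabla_{\mathcal X}\mathcal Y - \nabla_{\mathcal Y}\mathcal X$ acts as $\mathcal X\circ \mathcal Y - \mathcal Y\circ \mathcal X$. I would denote this latter derivation by $[\mathcal X,\mathcal Y]$ and note that, as an object in $\Gamma(TX)$, it depends only on $\mathcal X$ and $\mathcal Y$ (not on the choice of connection). Then (vi) reads simply $\nabla_{\mathcal X}\mathcal Y - \nabla_{\mathcal Y}\mathcal X = [\mathcal X,\mathcal Y]$.

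Next I would apply property (v) in the three cyclic arrangements,
\begin{align*}
\mathcal X(g(\mathcal Y,\mathcal Z)) &= g(\nabla_{\mathcal X}\mathcal Y,\mathcal Z) + g(\mathcal Y,\nabla_{\mathcal X}\mathcal Z),\\
\mathcal Y(g(\mathcal Z,\mathcal X)) &= g(\nabla_{\mathcal Y}\mathcal Z,\mathcal X) + g(\mathcal Z,\nabla_{\mathcal Y}\mathcal X),\\
\mathcal Z(g(\mathcal X,\mathcal Y)) &= g(\nabla_{\mathcal Z}\mathcal X,\mathcal Y) + g(\mathcal X,\nabla_{\mathcal Z}\mathcal Y),
\end{align*}
(understood pointwise in $p\in X$), then add the first two and subtract the third. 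Using the torsion-free identity $\nabla_{\mathcal U}\mathcal V - \nabla_{\mathcal V}\mathcal U = [\mathcal U,\mathcal V]$ to eliminate every occurrence of $\nabla$ except $\nabla_{\mathcal X}\mathcal Y$, I should arrive at the diffeological Koszul formula
\begin{align*}
2\,g_p(\nabla_{\mathcal X}\mathcal Y,\mathcal Z) &= \mathcal X(g(\mathcal Y,\mathcal Z))(p) + \mathcal Y(g(\mathcal X,\mathcal Z))(p) - \mathcal Z(g(\mathcal X,\mathcal Y))(p) \\
&\quad + g_p([\mathcal X,\mathcal Y],\mathcal Z) - g_p([\mathcal X,\mathcal Z],\mathcal Y) - g_p([\mathcal Y,\mathcal Z],\mathcal X).
\end{align*}
The right-hand side does not involve $\nabla$. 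If $\nabla$ and $\nabla'$ are two Levi-Civita connections, subtracting the two Koszul identities yields $g_p((\nabla_{\mathcal X}\mathcal Y-\nabla'_{\mathcal X}\mathcal Y)(p),\mathcal Z(p))=0$ for every $\mathcal Z\in\Gamma(TX)$ and every $p\in X$; positive definiteness of $g_p$ on $T_pX$ then gives $\nabla_{\mathcal X}\mathcal Y=\nabla'_{\mathcal X}\mathcal Y$.

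The step I expect to be the main obstacle is the opening one: making rigorous sense of $[\mathcal X,\mathcal Y]$ as an element of $\Gamma(TX)$ in the diffeological setting. Unlike the manifold case, a derivation of $C^{\infty}(X)$ need not automatically come from a section of $TX$ as defined in Section~\ref{subsec:DiffRiemSpace} (elements of $T_pX$ are built from paths, not from derivations). One has to either restrict attention to sections for which the commutator can be represented by a path-derivative field, or read (vi) in the weaker functional sense of "equal as derivations on $C^{\infty}(X)$", in which case the Koszul subtraction still yields $g_p(\nabla_{\mathcal X}\mathcal Y-\nabla'_{\mathcal X}\mathcal Y,\mathcal Z)=0$ for all test sections $\mathcal Z$, and nondegeneracy of $g_p$ on $T_pX$ closes the argument. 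Once this interpretational point is fixed, the remainder is a purely formal manipulation using axioms (v) and (vi), so uniqueness follows.
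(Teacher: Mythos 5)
Your proposal is correct and follows essentially the same route as the paper: the cyclic application of axiom (v), elimination of all covariant derivatives except one via the torsion-free axiom (vi), and the resulting Koszul formula whose right-hand side is independent of $\nabla$. You are in fact somewhat more careful than the paper in flagging that the commutator $\mathcal X\circ\mathcal Y-\mathcal Y\circ\mathcal X$ need not a priori be a section of $TX$ in the diffeological setting and in making the final appeal to positive definiteness explicit, but these are refinements of the same argument rather than a different one.
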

		
		\begin{proof}
			Thanks to property (v) of definition~\ref{def:lc_connection}, we have
			\begin{align*}
				\tag{1}
				\mathcal{Z}(g(\mathcal{X},\mathcal{Y})) = g(\nabla_{\mathcal{Z}}\mathcal{X},\mathcal{Y}) + g(\mathcal{X},\nabla_{\mathcal{Z}}\mathcal{Y})
			\end{align*}
			for all $\mathcal{X},\mathcal{Y},\mathcal{Z} \in \Gamma(TM)$.
			Therefore, we also have
			\begin{align*}
				\tag{2}
				\mathcal{Y}(g(\mathcal{Z},\mathcal{X})) 	& = g(\nabla_{\mathcal{Y}}\mathcal{Z},\mathcal{X}) + g(\mathcal{Z},\nabla_{\mathcal{Y}}\mathcal{X}), \\
				\tag{3}
				\mathcal{X}(g(\mathcal{Y},\mathcal{Z})) 	& = g(\nabla_{\mathcal{X}}\mathcal{Y},\mathcal{Z}) + g(\mathcal{Y},\nabla_{\mathcal{X}}\mathcal{Z}).
			\end{align*}
			
			Subtracting (2) from the sum of (1) and (3) yields
			\begin{align*}
				&\mathcal{Z}(g(\mathcal{X},\mathcal{Y})) + \mathcal{X}(g(\mathcal{Y},\mathcal{Z})) - \mathcal{Y}(g(\mathcal{Z},\mathcal{X}))\\
				& = g(\nabla_{\mathcal{Z}}\mathcal{X},\mathcal{Y}) + g(\mathcal{X},\nabla_{\mathcal{Z}}\mathcal{Y}) + g(\nabla_{\mathcal{X}}\mathcal{Y},\mathcal{Z}) + g(\mathcal{Y},\nabla_{\mathcal{X}}\mathcal{Z})\\
				&\phantom{= }\,\, - g(\nabla_{\mathcal{Y}}\mathcal{Z},\mathcal{X}) + g(\mathcal{Z},\nabla_{\mathcal{Y}}\mathcal{X})\\
				& = g\left(\nabla_{\mathcal{Z}} \mathcal{X}+\nabla_{\mathcal{X}} \mathcal{Z}, \mathcal{Y}\right)+g\left(\mathcal{X}, \nabla_{\mathcal{Z}}\mathcal{Y}-\nabla_{\mathcal{Y}} \mathcal{Z}\right)+g\left(\mathcal{Z}, \nabla_{\mathcal{X}} \mathcal{Y}-\nabla_{\mathcal{Y}} \mathcal{X}\right).
			\end{align*}
			Therefore we obtain
			\begin{align*}
				g\left(\nabla_{\mathcal{Z}} \mathcal{X}+\nabla_{\mathcal{X}} \mathcal{Z}, \mathcal{Y}\right) =	&-g\left(\mathcal{X}, \nabla_{\mathcal{Z}}\mathcal{Y}-\nabla_{\mathcal{Y}} \mathcal{Z}\right) 
				- g\left(\mathcal{Z}, \nabla_{\mathcal{X}} \mathcal{Y}-\nabla_{\mathcal{Y}} \mathcal{X}\right)\\
				&+ \mathcal{Z}(g(\mathcal{X},\mathcal{Y})) + \mathcal{X}(g(\mathcal{Y},\mathcal{Z})) - \mathcal{Y}(g(\mathcal{Z},\mathcal{X})).
			\end{align*}
			Since 
			\begin{align*}
				g\left(\nabla_{\mathcal{Z}} \mathcal{X}+\nabla_{\mathcal{X}} \mathcal{Z}, \mathcal{Y}\right) = 2g(\nabla_{\mathcal{Z}}\mathcal{X},\mathcal{Y}) - g(\mathcal{Z}\mathcal{X}-\mathcal{X}\mathcal{Z},\mathcal{Y}),
			\end{align*}
			holds, the combination with property (vi) of definition~\ref{def:lc_connection} yields 
			\begin{align*}
				2g(\nabla_{\mathcal{Z}}\mathcal{X},\mathcal{Y}) 
				=	& \mathcal{Z}(g(\mathcal{X},\mathcal{Y})) + \mathcal{X}(g(\mathcal{Y},\mathcal{Z})) - Y(g(\mathcal{Z},\mathcal{X}))\\
					&+ g(\mathcal{Z}\mathcal{X}-\mathcal{X}\mathcal{Z},\mathcal{Y}) 
					 - g(\mathcal{X}, \mathcal{Z}\mathcal{Y}-\mathcal{Y}\mathcal{Z})
					 - g(\mathcal{Z}, \mathcal{X}\mathcal{Y}-\mathcal{Y}\mathcal{X}).
			\end{align*}

			Here, the right-hand side does not depend on $\nabla$, which completes the proof.
		\end{proof}

		Since the exponential map is an expensive operation in optimization strategies, one often approximates it by a so-called \emph{retraction}. 
		Thus, we concentrate on the concept of a retraction in the following.
		For a diffeological space $ X $  with tangent space $ T_xX $ at $ x\in X $ we consider the tangent space
		\begin{align*}
			T_0(T_xX) = \{ \d_{\gamma} \mid \gamma \in \sfrac{\pathsx(T_xX)}{\sim} \}
		\end{align*}
		at $ 0 \in T_xX $.
		We start with a definition such that we assign a path in $ X $ to an element in $ x\in X $.
		For this, we define a smooth map 
		$	\overline{x} \colon \RR \to X,\, t \mapsto x$
		for a diffeological space $ X $ and an element $ x \in X $.
		By definition, we have $ \overline{x} \in \mathrm{Path}_x(X) $.
		Thus, we are able to consider $ \d_{\overline{x}} \in T_xX$.
		If $ f \in \Cinf(X) $, then 
		\begin{align*}
			\d_{\overline{x}}(f) 
				= \dfrac{\partial}{\partial u}f(\overline{x}(u)) 
				= \dfrac{\partial}{\partial u}f(x)
				= 0.
		\end{align*}
		In consequence, $ \d_{\overline{x}} = \d_0 $ holds.
		With this knowledge, we are able to define a diffeological retraction.

			\begin{definition}[Diffeological retraction]
				\label{def:Retraction}
				Let $ X $ be a diffeological space.
				A diffeological retraction of $ X $ is a map $ \mathcal{R} \colon TX \rightarrow X $ such that the following conditions hold:
				\begin{itemize}
					\item[(i)] 
						$ \mathcal{R}_{\mid_{T_xX}} (0) = x$
					\item[(ii)] 
						Let $\xi \in T_xX $ and $ \gamma_{\xi} \colon T_0\RR \to T_xX,\ t \mapsto \mathcal{R}_{\mid_{T_xX}}(t \xi) $. Then $T_0\gamma_{\xi}(0) = \xi$
				\end{itemize}
			\end{definition}

%%%%%%%%%%%%%%%%%%%%%%%%%%%%%%%%%%%%%%%%%%%%%%%%%%%%%%%%%%%%%%%%%%%%%%%%%%%%%%%%%%%%%%%%%%%%%%%%%%%%%%%%%%%%%%%%%%%%%%%%%%%%%%%%%%%%%%%%%%%%%%%%%%%%%%%%%%%%%%%%%%%%%%%%%%%%%%%%%%%%%%%%%%%%%%%%%%%%%%%%%%%%%%%%%%%%%%%%%%%%%%%%%%%%%%%%%%%%%%%%%%%%%%%%%%%%%%%%%%%%%%%%%%%%%%%%%%%%%%%%%%%%%%%%%%%%%%%%%%%%%%%%%%%%%%%%%%%%%%%%%%%%%%%%%%%%%%%%%%%%%%%%%%%%%%%%%%%%%%%%%%%%%%%%%%%%%%%%%%%%%%%%%%%%%%%%%%%%%%%%%%%%%%%%%%%%%%%%%%%%%%%%%%%%%%%%%%%%%%%%%%%%%%%%%%%%%%%%

It is worth to mention that if we are able to identify the path derivatives $\d_{\alpha}, \d_{\beta} \in T_xX$ with directions $v,w \in X$, the existence of a path $\gamma$ such that we are able to identify $\d_{\gamma}$ with $v+w$ is not guaranteed.
This can cause difficulties in the second condition of definition~\ref{def:Retraction}.
Therefore, we need to deal with a weaker definition of retractions, the so called \emph{weak diffeological retraction}.

	\begin{definition}
		\label{def:WeakRetraction}
		Let $X$ be a diffeological space and $CX$ the tangent cone bundle of $X$ as defined in (\ref{tangentcone}). 
		A \emph{weak diffeological retraction} of  $ X $ is a map $ \mathcal{R} \colon CX \rightarrow X $ such that the following conditions hold:
		\begin{itemize}
			\item[(i)] 
				$ \mathcal{R}_{\mid_{C_xX}} (0) = x$
			\item[(ii)] 
				Let $\xi \in C_xX $ and $ \gamma_{\xi} \colon C_0\RR \to C_xX,\ t \mapsto \mathcal{R}_{\mid_{C_xX}}(t \xi) $. 
				Then, $C_0\gamma_{\xi} = \xi$.
		\end{itemize}
		In (ii), the map $C_0\gamma_{\xi}$ is a tangential cone map and given by
		\begin{align*}
			C_0\gamma_{\xi} \colon C_0\RR \to C_0X, \d_{\alpha} \mapsto \d_{\gamma_{\xi} \circ \alpha}.
		\end{align*}
	\end{definition}

Now, we are ready to formulate the gradient descent method on diffeological spaces. In the next section, the algorithm is formulated and demonstrated on an example.

%%%%%%%%%%%%%%%%%%%%%%%%%%%%%%%%%%%%%%%%%%%%%%%%%%%%%%%%%%%%%%%%%%%%%%%%%%%%%%%%%%%%%%%%%%%%%%%%%%%%%%%%%%%%%%%%%%%%%%%%%%%%%%%%%%%%%%%%%%%%%%%%%%%%%%%%%%%%%%%%%%%%%%%%%%%%%%%%%%%%%%%%%%%%%%%%%%%%%%%%%%%%%%%%%%%%%%%%%%%%%%%%%%%%%%%%%%%%%%%%%%%%%%%%%%%%%%%%%%%%%%%%%%%%%%%%%%%%%%%%%%%%%%%%%%%%%%%%%%%%%%%%%%%%%%%%%%%%%%%%%%%%%%%%%%%%%%%%%%%%%%%%%%%%%%%%%%%%%%%%%%%%%%%%%%%%%%%%%%%%%%%%%%%%%%%%%%%%%%%%%%%%%%%%%%%%%%%%%%%%%%%%%%%%%%%%%%%%%%%%%%%%%%%%%%%%%%%%

\section{Formulation and application of diffeological optimization algorithms}
	\label{sec:Star}

	In this section, we first generalize algorithm~\ref{Algo} to diffeological spaces. Afterwards, we apply the formulated algorithms to an example to find the minimum of a function in a diffeological space.

	We start with formulating the steepest descent method on diffeological spaces in algorithm~\ref{AlgoDiff} (first, without specifying the step size strategy). For this algorithm, we need the objects defined in the sections above; in particular, the diffeological gradient and retraction. Please note that we are using the diffeological retraction instead of the exponential map in algorithm~\ref{AlgoDiff}. This differs from algorithm~\ref{Algo}, in which the exponential map is used, but it is also possible to use a retraction mapping in algorithm~\ref{AlgoDiff} as mentioned in remark~\ref{rem:Algo}.

	\begin{algorithm}
			\caption{Steepest descent method on the diffeological space $X$}
					\label{AlgoDiff}
			\begin{algorithmic}
				\State \textbf{Require:} Objective function $f$ on a diffeological Riemannian space $X$;
				\\\phantom{\textbf{Require:} }diffeological retraction $\mathcal{R}$ on $X$; 
				step size strategy.
				\vspace{.1cm}
				\State \textbf{Goal:} Find the solution of $\min\limits_{x\in X}f(x)$.
				\vspace{.1cm}
				\State \textbf{Input:} Initial data $x_0\in X$. 
				\vspace{.3cm}
				
				\State \textbf{for} $k=0,1,\dots$ \textbf{do}
				\vspace{.1cm}
				\State [1] Compute $\text{grad}f(x_k)$ denoting the diffeological shape gradient of $f$ in $x_k$.
				\vspace{.1cm}
				\State [2] Compute step size $t_k$.
				\vspace{.1cm}
				\State [3] Set $			x_{k+1}:= \mathcal{R}_{\mid_{T_{x_k}X}}\left(-t_k \text{grad}f(x_k)\right).$
				\vspace{.1cm}
				\State \textbf{end for}
				\vspace{.3cm}
			\end{algorithmic}
	\end{algorithm}

	\begin{remark}
		In practice, it is also possible to work with a weak diffeological retraction instead of a retraction algorithm~\ref{AlgoDiff}.
			However, one needs to take into account that a weak diffeological retraction is not strong enough if the gradient of the objective function is not an element of the tangent cone.
	\end{remark}

Now, we apply algorithm \ref{AlgoDiff} to an example. We first introduce the diffeological space in which we want to solve our optimization problem.
	
	\begin{definition}[The star]
		\label{def:TheStar}
		Let $v_i\in\RR^2$ for $i\in\NN$ such that $v_i \neq v_j$ for $i \neq j$ and $\| v_i \| = 1 $ for all $i \in \NN$, where $\|\cdot \|$ denotes the Euclidean norm.
		The set
		\begin{align*}
			\mathcal{S} = S(\{\left.v_i\right| i\in\NN\}):=\left\{\left.\mathbb{R} \cdot v_{i}\right| i \in \NN \right\} \subset \RR^2
		\end{align*}
		is called the star.
		We define the natural inclusions from $\RR$ to $S$ by
		\begin{align*}
			\iota_{v_i} \colon \RR \to S,\ r \mapsto rv_i
		\end{align*}
		for $i \in \NN$ as well as the shift to $x \in S$ by 
		\begin{align*}
			\widetilde{\iota_{v_i}} \colon \RR \to S,\ r \mapsto rv_i + x.
		\end{align*}
	\end{definition}
	
	Figure~\ref{fig:4lines} illustrates the set $S(\{v_1,v_2,v_3,v_4\})$ for $v_1 = (1,0)^{\top}$, $v_2 = (0,1)^{\top}$, $v_3 = \tfrac{1}{\sqrt{2}}(1,1)^{\top}$ and $v_4 = \tfrac{1}{\sqrt{2}}(-1,1)^{\top}$ in order to get an impression of the star $\mathcal{S}$.
	
	\begin{figure}[htbp]
		\centering
		\includegraphics[width=.5\textwidth]{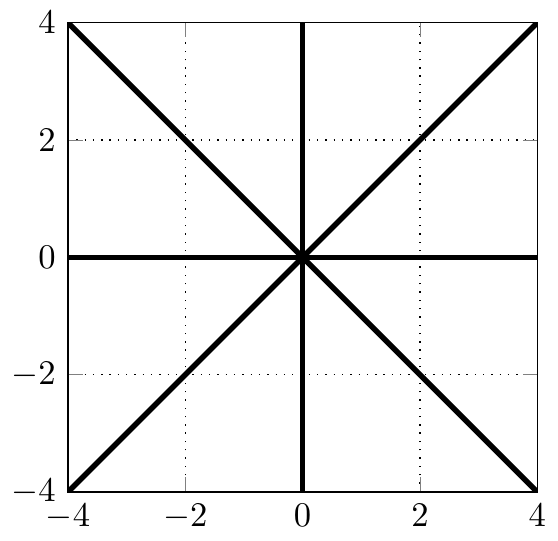}
		\caption{Visualization of $S(\{v_1,v_2,v_3,v_4\})$ in $[-4,4] \times [-4,4]$, where $v_1 = (1,0)^{\top}$, $v_2 = (0,1)^{\top}$, $v_3 = \tfrac{1}{\sqrt{2}}(1,1)^{\top}$ and $v_4 = \tfrac{1}{\sqrt{2}}(-1,1)^{\top}$}
		\label{fig:4lines}
	\end{figure}

		Next, we endow $\mathcal{S}$ with a diffeology.
		There are several choices for a diffeology on the star $\mathcal{S}$.
		We consider $\mathcal{S}$ as a subset of $\RR^2$ and, therefore, we equip $\mathcal{S}$ with the subset diffeology of $\RR^2$ and denote it by $D_{\mathcal{S}}$.
		We notice that $D_{\mathcal{S}}$ is generated by the set of natural inclusions $\{\iota_{v_i} \mid i \in \NN\}$.
		
		We want to solve a minimization problem in the diffeological space $(\mathcal{S}, D_{\mathcal{S}})$. Thus, we need to discuss
		 the tangent cone and the tangent space and define a Riemannian structure on $\mathcal{S}$. We do this before formulating the problem.

		Let $\alpha$ be a path in $\mathcal{S}$ through $x \in \mathcal{S}$.
		Then, there are a $k \in \NN$ and an $r \in \NN$ such that
		\begin{align*}
			\d_{\alpha} = r \cdot \d_{\widetilde{\iota_{v_k}}}.
		\end{align*}
		For the path derivative, it is only important how $\alpha$ behaves near zero and how the map $T_0\alpha$ looks like.
		Obviously, $T_0\alpha = r \cdot T_0 \widetilde{\iota_{v_k}}$ for some $r \in \RR$ and $k \in \NN$ such that $\alpha(0) \in \RR \cdot v_k$.
		Thus, we obtain the following theorem with definition~\ref{def:CxX} and definition~\ref{def:TxX}.
		\begin{lemma}
		\label{le:star_ts}
			The tangent cone of $\mathcal{S}$ at $x \in \mathcal{S}$ is given by
			\begin{align*}
				C_x\mathcal{S} = \{ r \cdot \d_{\widetilde{\iota_{v_i}}} \mid r \in \RR,\ i \in \NN \text{ such that } x \in \RR \cdot v_i\}.
			\end{align*}
			Moreover, 
			\begin{align*}
				T_x\mathcal{S} =\mathrm{span} \{ r \cdot \d_{\widetilde{\iota_{v_i}}} \mid r \in \RR,\ i \in \NN \text{ such that } x \in \RR \cdot v_i \}
			\end{align*}
			is the tangent space of $\mathcal{S}$ at $x \in \mathcal{S}$.
		\end{lemma}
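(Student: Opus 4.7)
My plan is to prove the formula for $C_x\mathcal{S}$ by double inclusion, and then read off the formula for $T_x\mathcal{S}$ immediately by taking the span, as prescribed in definition~\ref{def:TxX}.

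For the inclusion of the right-hand side into $C_x\mathcal{S}$, I would fix $r \in \RR$ and an index $i \in \NN$ with $x \in \RR\cdot v_i$, and exhibit the explicit affine path $\beta_{r,i}\colon \RR \to \mathcal{S}$, $t\mapsto trv_i + x$. This path takes values in $\RR\cdot v_i \subseteq \mathcal{S}$ and is smooth as a map into the ambient $\RR^2$, hence is a plot of $\mathcal{S}$ by the subset diffeology $D_{\mathcal{S}}$; so $\beta_{r,i} \in \mathrm{Paths}_x(\mathcal{S})$. A short chain-rule calculation from definition~\ref{def:PathDerivative} gives $\d_{\beta_{r,i}} = r\cdot \d_{\widetilde{\iota_{v_i}}}$, proving that $r\cdot\d_{\widetilde{\iota_{v_i}}} \in C_x\mathcal{S}$.

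For the reverse inclusion, I would take an arbitrary $\alpha \in \mathrm{Paths}_x(\mathcal{S})$ and show that $\alpha \sim \beta_{r,i}$ for suitable $r$ and $i$, in the sense of definition~\ref{def:Identification}. The crux is a localization claim: there exists an index $i$ with $x \in \RR\cdot v_i$ and a neighborhood of $0$ on which $\alpha(t) \in \RR\cdot v_i$. I would split into two cases. If $x \neq 0$, continuity of $\alpha$ into $\RR^2$ yields $\delta>0$ such that $\alpha(t)\neq 0$ for $|t|<\delta$; since any two distinct lines of $\mathcal{S}$ meet only at the origin, the image $\alpha((-\delta,\delta))$ lies in the connected component of $\mathcal{S}\setminus\{0\}$ through $x$, which is a subset of $\RR\cdot v_i\setminus\{0\}$. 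If $x = 0$, I split further: either $\alpha'(0) = 0$ in $\RR^2$, in which case $\d_\alpha$ is the zero functional and coincides with $0\cdot\d_{\widetilde{\iota_{v_i}}}$ for any admissible $i$; or $\alpha'(0) \neq 0$, and the difference quotient $\alpha(t)/t$ lies in $\mathcal{S}$ for $t\neq 0$ (by scale invariance of $\mathcal{S}$ under real multiplication) and converges to $\alpha'(0)$, which forces $\alpha'(0) \in \RR\cdot v_i$ for some $i$. Once the path is localized to $\RR\cdot v_i$, I would write $\alpha(t) = \lambda(t) v_i + x$ with $\lambda$ smooth near $0$, set $r \coloneqq \lambda'(0)$, and verify $\tilde{T}_0\alpha = \tilde{T}_0\beta_{r,i}$ directly, so that $\alpha \sim \beta_{r,i}$ and $\d_\alpha = r\cdot\d_{\widetilde{\iota_{v_i}}}$. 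The statement for $T_x\mathcal{S}$ then follows at once by taking the span, as in definition~\ref{def:TxX}.

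The main obstacle is the localization step. For $x \neq 0$, it reduces to the elementary topological observation that distinct lines in $\mathcal{S}$ meet only at $0$, so that any continuous transition of $\alpha$ between two different lines would force $\alpha$ to hit the origin, contradicting $\alpha(t)\neq 0$ near $t = 0$. The subtler case is $x = 0$ with $\alpha'(0) \neq 0$: here one must combine the scale invariance of $\mathcal{S}$ with $C^1$ smoothness of $\alpha$ at $0$ to conclude that the direction $\alpha'(0)$ is realised by one of the prescribed vectors $v_i$; the other parts of the argument (chain rule, passage to the span) are then routine.
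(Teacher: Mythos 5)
Your strategy is the one the paper itself gestures at: the paper disposes of this lemma in the paragraph preceding it by declaring that ``obviously'' $T_0\alpha = r\cdot T_0\widetilde{\iota_{v_k}}$ for the line containing $\alpha(0)$, and your double inclusion is an attempt to actually prove that assertion. The forward inclusion via $\beta_{r,i}(t)=trv_i+x$ is correct, as is the localization for $x\neq 0$ (with the caveat that, since Definition~\ref{def:TheStar} allows countably many directions that may accumulate, the statement ``a continuous transition between two lines must pass through the origin'' should be justified by noting that $t\mapsto\alpha(t)/\|\alpha(t)\|$ is a continuous map of a connected interval into the countable set $\{\pm v_i\}$ and is therefore constant). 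However, two sub-steps of the case $x=0$ have genuine problems. In the sub-case $\alpha'(0)\neq 0$, your argument that the difference quotients $\alpha(t)/t\in\mathcal{S}$ converge to $\alpha'(0)$ and hence force $\alpha'(0)\in\RR\cdot v_i$ tacitly uses that $\mathcal{S}$ is closed in $\RR^2$, which fails when the $v_i$ accumulate; and even granting $\alpha'(0)\in\RR\cdot v_i$, this does not localize the path, which is what you actually need. The repair is to run the connectedness argument one-sidedly on $(0,\delta)$ and $(-\delta,0)$ (where $\alpha\neq 0$) to get $\alpha((0,\delta))\subset\RR v_j$ and $\alpha((-\delta,0))\subset\RR v_k$, and then observe that both closed lines contain the nonzero vector $\alpha'(0)$, so $\RR v_j=\RR v_k$ and $\alpha$ is localized after all.

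The second, more serious gap is the bare assertion that $\alpha'(0)=0$ implies $\d_\alpha=0$. This does not follow from a chain rule in the ambient $\RR^2$, because $f$ is only diffeologically smooth on $\mathcal{S}$ and need not extend to a smooth function on a neighborhood; a priori a diffeologically smooth $f$ could have unbounded ``slopes'' $(f\circ\iota_{v_i})'$ across the infinitely many branches, and then $\|\alpha(t)\|=O(t^2)$ alone does not give $(f\circ\alpha)'(0)=0$. The claim is nevertheless true, but it needs an argument such as the following dichotomy. If $0$ is isolated in $\alpha^{-1}(0)$ from one side, then $\alpha$ localizes to a single line on that side and the one-variable chain rule applied to $f\circ\iota_{v_j}$ gives one-sided derivative $(f\circ\iota_{v_j})'(0)\cdot 0=0$. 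If instead $\alpha^{-1}(0)$ accumulates at $0$ from that side, then the difference quotient of $f\circ\alpha$ vanishes along the accumulating zeros; since $f\circ\alpha$ is smooth (because $\alpha$ is a plot and $f$ is diffeologically smooth), its derivative at $0$ exists and therefore equals this sequential limit, namely $0$. Without some such argument this sub-case is a real hole, and it is precisely the point where the diffeological setting differs from the naive picture of $\mathcal{S}$ as a subset of $\RR^2$.
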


		\begin{remark}
			We need to distinguish between two cases, $x \neq 0$ and $x = 0$, to detail the tangent cone and tangent space defined in lemma~\ref{le:star_ts}.
			
			If $x \neq 0$, then there is only one line through $x$ and, thus,
			\begin{align*}
				C_x\mathcal{S} = \{r \cdot \d_{\widetilde{\iota_{v_k}}} \mid r \in \RR,\, k \in \NN\text{ such that }x \in \RR \cdot v_k \}.
			\end{align*}
			Therefore, the tangent space in $x$ is identical to $C_x\mathcal{S}$, i.e.,
			\begin{align*}
				T_x\mathcal{S} =  \{r  \cdot \d_{\widetilde{\iota_{v_k}}} \mid r \in \RR,\, k \in \NN\text{ such that }x \in \RR \cdot v_k \}.
			\end{align*}

			The case $x = 0$ is more challenging than the case $x \neq 0$.
			Since every line $\RR \cdot v_i$ for $i \in \NN$ goes through zero, the tangent cone through $x$ is given by
			\begin{align*}
				C_0\mathcal{S} = \{ r \cdot \d_{\iota_{v_i}} \mid r \in \RR \text{ and } i \in \NN \}.
			\end{align*}
			In this case, the tangent space does not coincide with the tangent cone. We have
			\begin{align*}
			T_0\mathcal{S} = \mathrm{span} \{C_0\mathcal{S} \}.
			\end{align*}
			More precisely, an element $\xi \in T_0\mathcal{S}$ is given by the finite sum
			\begin{align*}
				\xi = \sum_{i \in I} r_i \cdot \d_{\iota_{v_i}}
			\end{align*}
			for $r_i \in \RR$ and $I \subset \NN$.
			The set $I$ is finite due to the fact that $T_0\mathcal{S}$ is a vector space and every element in a vector space is uniquely given by a finite linear combination of basis elements.
		\end{remark}

		After the discussion about the tangent space of $\mathcal{S}$, we  concentrate now on a Riemannian structure on $\mathcal{S}$.
		We consider the constant map 
		\begin{align*}
			g \colon \mathcal{S} \to \mathrm{Sym}(T\mathcal{S},\RR), x \mapsto \langle \cdot,\ \cdot \rangle_{\mathcal{S}},
		\end{align*}
		where
		\begin{align*}
			\langle \xi,\ \chi \rangle_{\mathcal{S}} = \sum_{i\in I, j \in J} \lambda_i \mu_j \langle v_i,\ v_j \rangle_{\RR^2}
		\end{align*}
		for $\xi =  \sum_{i\in I} \lambda_i \cdot d_{\widetilde{\iota_{v_i}}}$ and $\chi = \sum_{j\in J} \mu_j \cdot \d_{\widetilde{\iota_{v_j}}}$, where $I,J \subset \NN$ are finite subsets.
		A plot of $T_x\mathcal{S}$ is locally of the form
			\begin{align*}
			u \mapsto \sum_{i=1}^{n} \lambda_{i}(u) \d_{\gamma_{i}(u)}
			\end{align*}
			where $\lambda_{i}\colon U \rightarrow \mathbb{R}$ is smooth and $\gamma_{i}\colon U \rightarrow C^{\infty}(\RR, \mathcal{S})$ smooth with $\gamma_{i}(u)(0)=x$ for all $u \in U$ (cf. \cite{vincent}). 
		Thus, $(\xi, \chi) \mapsto \langle \xi,\ \chi \rangle_{\mathcal{S}}$ is smooth.

		In order to be able to apply algorithm~\ref{AlgoDiff}, we still need a concept to update the iterates. This is realized by a retraction.
		We notice that for two generating elements $\d_{\iota_{v_k}}, \d_{\iota_{v_l}}$ in $T_0\mathcal{S}$, it is generally not true that for $\d_{\gamma} \coloneqq d_{\iota_{v_k}} + \d_{\iota_{v_l}}$ there exists a $v_j \in (v_i)_{i \in \NN}$ such that $\gamma$ corresponds to $v_j$. This is because  $ v_k + v_i$ need not to be an element in $\mathcal{S}$.
		For example, let us consider the diffeological space $S(v,w)$ for $v = (1,0)^{\top}, w= (0,1)^{\top}$.
		The tangent cone at $0$ of $S(v,w)$ is then given by 
		\begin{align*}
			C_0S(v,w) =  \{r  \cdot \d_{{\iota_{v}}},r'  \cdot \d_{{\iota_{w}}} \mid r,r' \in \RR\}
		\end{align*}
		analogue to lemma~\ref{le:star_ts}. In contrast, the tangent space at $0$ is given by
		\begin{align*}
			T_0S(v,w) = \{ \lambda \d_{{\iota_{v}}} + \mu \d_{{\iota_{w}}} \mid \lambda, \mu \in \RR \}.
		\end{align*}
		We could consider $(\d_{{\iota_{v}}} + \d_{{\iota_{w}}})(f)$ for $f \in \Cinf(S(v,w))$
		but $f$ is not defined on the line $\RR \cdot (v+w)$. Thus, we do not have a directional derivative in direction $v+w$, which induces that property (ii) of definition~\ref{def:Retraction} cannot be fulfilled. Therefore, we concentrate on weak diffeological retractions in the following.

		Now, we are ready to solve an optimization problem in the diffeological space $\mathcal{S}$ by applying algorithm~\ref{AlgoDiff}.
		For $s \in \mathcal{S}$ we consider the minimization problem
		\begin{align}
		\label{eq:grad_x}
			\min_{x \in \mathcal{S}} f_s(x)
		\end{align}
		for
		\begin{align}
		\label{objective}
			f_s \colon \mathcal{S} \to \RR,\ x \mapsto 
				\begin{cases}
					\| x-s \|		&	\text{if it exist an }  i \in \NN \text{ s.t. } x,s \in \RR \cdot v_i ,\\
					\| x \| + \|s\|	&	\text{else}.
				\end{cases}
		\end{align}
		
		Next, the diffeological gradient of the objective function (\ref{objective}) and at least a weak diffeological retraction needs to be defined for an application of algorithm~\ref{AlgoDiff} to solve (\ref{eq:grad_x}).
		For the computation of the gradient, we distinguish between two cases: $x = 0$ and $x \neq 0$.
		
		In the  case $x \neq 0$, the tangent space $T_x\mathcal{S}$ is characterized by a vector $v \in \{v_i\mid i \in \NN \}$ for which $x$ is an element in $\RR \cdot v$. 
		Therefore, the diffeological gradient of $f$ at $x = \lambda v_k$ is given by
		\begin{align}
		\label{eq:grad_0}
			\grad f_s(x) = 
			\begin{cases}
				\dfrac{\lambda \d_{{\widetilde{\iota_{v}}}}}{\| x\|}				&	\text{for } s \notin \RR \cdot v ,\\[.3cm]
				\dfrac{(\lambda - \mu) \d_{{\widetilde{\iota_{v}}}}}{\| x-s\|}		&	\text{for } s = \mu v \text{ with }\mu \in \RR.
			\end{cases}
		\end{align}
		
		If $x = 0$ then there exists a vector $v \in \{v_i \mid i \in \NN\}$ for every point $y \in \mathcal{S}$ such that $x,y$ are elements in $\RR \cdot v$.
		In consequence, the gradient of $f$ is given by
		\begin{align*}
			\grad f_s (0) = \dfrac{- \mu \d_{{\iota_{w}}}}{\| s\|} = \d_{{\iota_{w}}}
		\end{align*}
		for $\mu \in \RR$ and $w \in \{v_i \mid i \in \NN\}$ such that $ s = \mu w$.

		We will work with a weak diffeological retraction instead of a diffeological retraction due to the nature of $\mathcal{S}$.
		Since elements in $C_x\mathcal{S}$ are determined by vectors $v \in \{v_i \mid i \in \NN\}$ (cf.~lemma~\ref{le:star_ts}), we consider the map
		\begin{align}
			\label{eq:star_retractrion}
			\mathcal{R} \colon C\mathcal{S} \to \mathcal{S},\ 	\mathcal{R}_{\mid_{C_x\mathcal{S}}} (r \cdot \d_{\widetilde{\iota_{v}}}) = x - r v
		\end{align}

		\begin{lemma}
		\label{le:star_retraction}
			The map \ref{eq:star_retractrion} is a weak diffeological retraction for $\mathcal{S}$.
		\end{lemma}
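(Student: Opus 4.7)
The plan is to verify in turn that $\mathcal{R}$ is well-defined on $C\mathcal{S}$, smooth as a map of diffeological spaces, and satisfies the two axioms of Definition~\ref{def:WeakRetraction}. Well-definedness rests on Lemma~\ref{le:star_ts}: every $\xi \in C_x\mathcal{S}$ has the form $r \cdot \d_{\widetilde{\iota_{v}}}$ with $x \in \RR \cdot v$, and if $-v$ also happens to index a line of the star, then the alternative representation $-r \cdot \d_{\widetilde{\iota_{-v}}}$ yields $x-(-r)(-v) = x-rv$ under $\mathcal{R}$, matching the first. Smoothness I would verify plot by plot: a plot of $C\mathcal{S}$ is locally $u \mapsto \d_{\gamma(u)}$ for a smooth $\gamma \colon U \to \paths(\mathcal{S})$ whose image stays in a single line $\RR \cdot v_k$, so $\mathcal{R} \circ p$ is locally the composition of a smooth scalar map $u \mapsto r(u)$ with the natural inclusion $\iota_{v_k}$, and hence is a plot of $\mathcal{S}$.

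Axiom~(i) is immediate, since $\mathcal{R}_{\mid_{C_x\mathcal{S}}}(0) = x - 0 \cdot v = x$. For Axiom~(ii), I would fix $\xi = r \cdot \d_{\widetilde{\iota_{v}}} \in C_x\mathcal{S}$ and examine the induced path
\begin{equation*}
\gamma_\xi \colon \RR \to \mathcal{S}, \quad t \mapsto \mathcal{R}_{\mid_{C_x\mathcal{S}}}(t\xi) = x - trv.
\end{equation*}
Since $x \in \RR \cdot v$, the image of $\gamma_\xi$ lies entirely on the single line $\RR \cdot v \subset \mathcal{S}$; thus $\gamma_\xi$ factors through $\iota_v$, is smooth, and belongs to $\paths_x(\mathcal{S})$. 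The tangential cone map $C_0 \gamma_\xi \colon C_0 \RR \to C_x\mathcal{S}$, $\d_\alpha \mapsto \d_{\gamma_\xi \circ \alpha}$, is then well-defined, and I would interpret the required identity $C_0 \gamma_\xi = \xi$ via the canonical identification $C_0\RR \cong \RR$ corresponding to $\d_{\mathrm{id}_\RR} \leftrightarrow 1$, i.e.\ as $\d_{\gamma_\xi} = \xi$. A chain-rule evaluation on an arbitrary $f \in \Cinf(\mathcal{S},\RR)$, relying on the fact that $f \circ \iota_v$ is a smooth function of the line parameter, reduces this to a one-dimensional identity in the scalar $r$.

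The main obstacle is the interpretation of Axiom~(ii), since it formally equates a map $C_0 \gamma_\xi$ with an element $\xi$; once the identification $C_0\RR \cong \RR$ is fixed, the verification must track signs carefully so that the chain-rule output of $\d_{\gamma_\xi}(f)$ agrees with $r \cdot \d_{\widetilde{\iota_v}}(f)$ under the convention of Definition~\ref{def:PathDerivative}, which is what pins down the sign in the formula $\mathcal{R}(r \cdot \d_{\widetilde{\iota_v}}) = x - rv$. A small case distinction is still needed at $x = 0$, where $C_0\mathcal{S}$ decomposes into branches indexed by the different $v_i$; however, for each fixed $\xi$ the path $\gamma_\xi$ remains in the single branch $\RR \cdot v$, so the verification collapses to the same one-line calculation used for $x \neq 0$, completing the argument.
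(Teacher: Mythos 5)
Your proposal follows essentially the same route as the paper: axiom (i) is checked by direct substitution, and axiom (ii) is verified by evaluating the tangential cone map $C_0\gamma_\xi$ on path derivatives $\d_\alpha$ against an arbitrary $f\in\Cinf(\mathcal{S})$, using the chain rule to reduce to a one-dimensional computation along the single line $\RR\cdot v$ containing the image of $\gamma_\xi$, with the case $x=0$ collapsing to the same calculation. Your additional attention to well-definedness, to plotwise smoothness of $\mathcal{R}$, and to the sign convention relating $\d_{\gamma_\xi}$ to $r\cdot\d_{\widetilde{\iota_v}}$ goes beyond what the paper records (its own manipulation of $f(x-trv)$ versus $f(r\,\widetilde{\iota_v}(t))$ is in fact loose on exactly this point), so flagging it is warranted rather than redundant.
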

		
		\begin{proof}
			We consider two cases: $ x \neq 0 $ and $ x = 0 $.
			
			Firstly, let $ x \neq 0$.
			Thanks to lemma~\ref{le:star_ts} we know that $\xi \in C_x\mathcal{S}$ is given by $r \cdot \d_{{\widetilde{\iota_{v}}}}$ for one $v \in \{v_i \mid i \in \NN\}$.
			Therefore, the first property of definition~\ref{def:WeakRetraction} holds obviously. 
			By definition, $\mathcal{R}_{\mid_{C_x\mathcal{S}}}(0) = \mathcal{R}_{\mid_{C_x\mathcal{S}}}(0 \cdot \d_{{\widetilde{\iota_{v}}}}) = x - 0v = x$ holds.
			For checking the second property, we consider $f \in \Cinf (\mathcal{S})$ and  a path $\alpha \colon \RR \to \RR$ with $\alpha(0) = 0$.
			By definition of $C_0\gamma_{\xi}$ we have
			\begin{align*}
				C_0\gamma_{\xi}(\d_{\alpha})[f]	
					&=	d_{\gamma_{\xi} \circ \alpha}[f] 
					=	\dot{\alpha}(0) \d_{\gamma_{\xi}}[f].
			\end{align*}
			With definition~\ref{def:WeakRetraction} we then see that
			\begin{align*}
				  \dot{\alpha}(0) \d_{\gamma_{\xi}}[f]
				= \dot{\alpha}(0) \d_0 \left( f(\mathcal{R}_{\mid_{C_x\mathcal{S}}}(rt \cdot \d_{{\widetilde{ \iota_{v} }}})) \right) 
				= \dot{\alpha}(0) \d_0\left( f(x-trv) \right)
			\end{align*}
			holds.
			Thanks to definition~\ref{def:TheStar} we obtain
			\begin{align*}
				\dot{\alpha}(0) \d_0\left( f(x-trv) \right) = \dot{\alpha}(0) \d_0\left( f(r \widetilde{\iota_{v}}(t)) \right).
			\end{align*}
		Finally, considering definition~\ref{def:PathDerivative} and the definition of $\xi$ gives
			\begin{align*}
						\dot{\alpha}(0) \d_0\left( f(r \widetilde{\iota_{v}}(t)) \right) 
					=	\dot{\alpha}(0) r \cdot \d_{{\widetilde{\iota_{v}}}}[f] 
					=	\dot{\alpha}(0) \xi[f].
			\end{align*}
			In summary, $C_0\gamma_{\xi}(\d_{\alpha})[f] = \dot{\alpha}(0) \xi[f]$ holds and, thus, property (ii) of definition~\ref{def:WeakRetraction} is fulfilled for $x \neq 0$.
			
			The case $x = 0$ proceeds analogously to the case $x \not= 0$.
		\end{proof}

		We use algorithm~\ref{AlgoDiff} with a fixed step size and the weak diffeological retraction (\ref{eq:star_retractrion}) instead of the exponential map to solve (\ref{eq:grad_x})--(\ref{objective}).
		Let $v = (0,1)^\top$ and $w = \tfrac{1}{\sqrt{2}}(1,1)^\top$ are elements in $\{v_i \mid i \in \NN\}$.
		We consider $s =2\sqrt{2}  (1,1)^\top$, the starting point $x_0 = (0,3)^\top$ and the constant step size $t_k= 1$ for all $k\in\mathbb{N}_0$.
		Thanks to (\ref{eq:grad_x}), (\ref{eq:grad_0}), (\ref{eq:star_retractrion}) and lemma~\ref{le:star_retraction} we obtain the following iterates:
		
			\begin{align*}
		\begin{array}{lclclcl}
		x_0 	&=& (0,3)^{\top}			&\qquad&		x_{4} &=& \tfrac{1}{\sqrt{2}} (1,1)^{\top}	\\ 
		x_1 	&=& (0, 2)^{\top}			&&		x_{5} &=& \sqrt{2} (1,1)^{\top}	\\ 
		x_2 	&=& (0, 1)^{\top}			&&		x_{6} &=& \tfrac{3}{\sqrt{2}} (1,1)^{\top}	\\ 
		x_3 	&=& (0,0)^{\top}		&&		x_{7} &=& 2\sqrt{2} (1,1)^{\top}	
		\end{array}
		\end{align*}

		\noindent
		The iteration steps of the diffeological gradient descent method for our example are visualized in figure~\ref{fig:Star_GV}.
		
		\begin{figure}%[htbp]
			\centering
			\includegraphics[width=.7\textwidth]{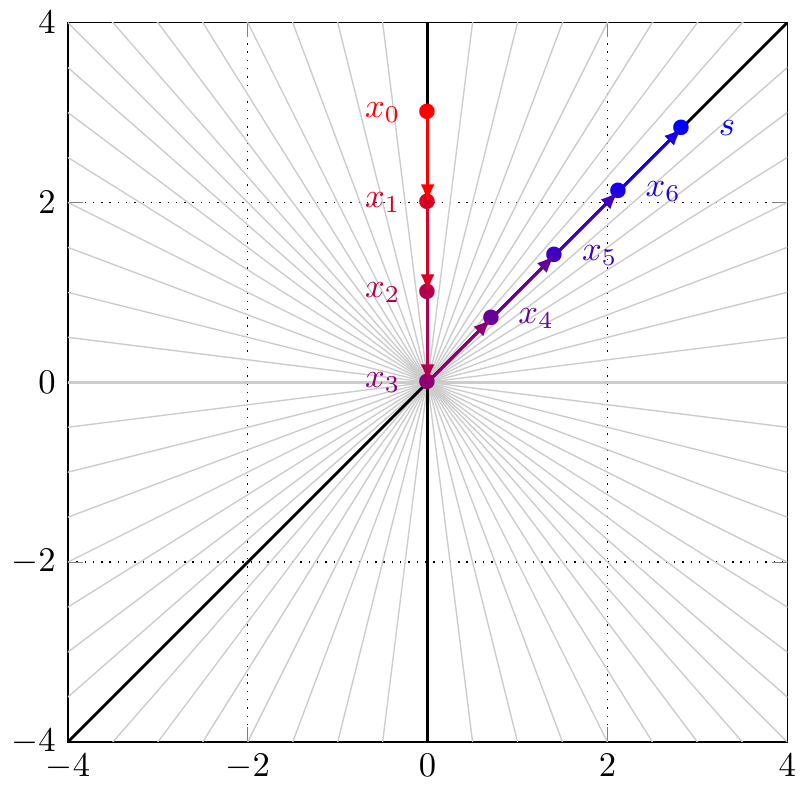}
			\caption{Iteration steps of algorithm~\ref{AlgoDiff}  solving (\ref{eq:grad_x})--(\ref{objective}) for $s = 2\sqrt{2} (1,1)^{\top}$ and starting point $x_0 = (0,3)^{\top}$. In algorithm~\ref{AlgoDiff}, we chose the constant step size $d=1$ and the weak diffeological retraction (\ref{eq:star_retractrion}) instead of the exponential map. Here, $ \mathcal{S}$ is  visualized in $[-4,4] \times [-4,4]$ by only drawing a finite number of lines.}
			\label{fig:Star_GV}
		\end{figure}

	As stopping criterion in the diffeological gradient descent method, we can use $\operatorname{grad}f(x_i) \leq \varepsilon$ as well as $f_s(x_i) \leq \varepsilon$.
	Thanks to the nature of $f_s$ both conditions are valuable.
	In general, a good stopping criterion is not easily to find because diffeological spaces are very different from manifolds and a priori we are not able to take advantage of the information of a Hessian as one do on manifolds.
	Therefore, some further investigations in this direction are necessary and left to future work.
	
	Finally, we discuss the choice of step sizes in the diffeological gradient descent method. In our example above, we choose a constant step size. Unfortunately, a constant step size does not automatically hit the origin which we need to change the direction in the optimization process to solve the model problem. In our example above, we choose an appropriate value for the step size and do not get such a problem. 
	In order to avoid the dependence on suitable choices for the step size values, the step size can be computed exactly by solving an additional minimization problem or, e.g., an Armijo backtracking line search technique can be used to calculate the step size in each iteration.
	In our example, computing the exact step size leads to only two iteration steps.
	Here, only the structure of $\mathcal{S}$ and $f_s$ is responsible for such a fast convergence. 
	The first step is of length $\|x_0\|$ and leads to the origin. The second step is of length $\|s\|$ and gives the optimum. 
	However, as mentioned already above, computing the exact step size requires solving an additional optimization problem which leads in more complex examples to extra computational effort. 
	Further, the structure of the diffeological space complicates  the computation of the solution of an additional optimization problem.

			\begin{algorithm}%[H]
				\caption{Steepest descent method on the diffeological space $X$ with Armijo backtracking line search}
					\label{AlgoBacktrakcing}
				\begin{algorithmic}
					\State 
						\textbf{Require:} 
							Objective function $f$ on a diffeological Riemannian space $X$; 
							\\\phantom{\textbf{Require:} }diffeological retraction $\mathcal{R}$ on $X$.
					\vspace{.1cm}
					\State \textbf{Goal:} 
						Find the solution of $\min\limits_{x\in X}f(x)$.
					\vspace{.1cm}
					\State \textbf{Input:} 
						Initial data $x_0 \in X$; 
						constants $\hat{\alpha}>0$ and $	{\displaystyle \sigma, \rho \in (0,1)}$ for Armijo \\\phantom{\textbf{Input: }}backtracking strategy
					\vspace{.3cm}
					
						\State \textbf{for} $k=0,1,\dots$ \textbf{do}
					\vspace{.1cm}
					\State [1] Compute $\text{grad}f(x_k)$ denoting the diffeological shape gradient of $f$ in $x_k$.
					\vspace{.1cm}
					\State [2] Compute Armijo backtracking step size: 
					\vspace{.1cm}
					\State \hspace*{1cm} Set $\alpha := \hat{\alpha}$.
					\State \hspace*{1cm} \textbf{while} $ f\big(\mathcal{R}_{\mid_{T_{x_k}X}}\left(-t_k \text{grad}f(x_k)\right)\big) > f(x^k)-\sigma\alpha \left\|\text{grad}f(x_k)\right\|^2_{T_{x_k}X}$ 
					\State \hspace*{1cm} Set $ \alpha :=\rho \alpha $.
					\State \hspace*{1cm} \textbf{end while}
					\State \hspace*{1cm} Set $t_k:=\alpha$.
					\vspace{.1cm}
					\vspace{.1cm}
					\State [3] Set $			x_{k+1}:= \mathcal{R}_{\mid_{T_{x_k}X}}\left(-t_k \text{grad}f(x_k)\right).$
					\vspace{.1cm}
					\State \textbf{end for}
					\vspace{.3cm}
				\end{algorithmic}
%				\begin{minipage}{\textwidth}
%					\captionof{algocf}{Steepest descent method on the diffeological space $X$ with Armijo backtracking line search}
%				\end{minipage}
			\end{algorithm}
			
			In practice, the above-mentioned backtracking step size strategy is often used.
			In algorithm~\ref{AlgoBacktrakcing}, the steepest descent method on a diffeological space together with an Armijo backtracking line search strategy is formulated.
			Here, the norm introduced by the Riemannian metric under consideration is needed, $\|\chi\|^2_{T_{x}X}:= g_x( \chi, \chi)$. 
			In our experiment, we apply algorithm~\ref{AlgoBacktrakcing} to solve (\ref{eq:grad_x})-(\ref{objective}).
			Choosing $\sigma = 0.1, \rho = 0.5$ and $\hat{\alpha} = 10$ % and precision $10^{-6}$ 
			results in the following iterates:
			\begin{align*}
				\begin{array}{lclclcl}
					x_0 	&=& (0,3)^{\top}			&\qquad&	x_{12} &=& (0.0, 0.0)^{\top}			\\
					x_1 	&=& (0.0, -2.0)^{\top}			&&		x_{13} &=& (3.535534, 3.535534)^{\top}	\\ 
					x_2 	&=& (0.0, 0.5)^{\top}			&&		x_{14} &=& (2.65165, 2.65165)^{\top}	\\ 
					x_3 	&=& (0.0, -0.125)^{\top}		&&		x_{15} &=& (2.872621, 2.872621)^{\top}	\\ 
					x_4 	&=& (0.0, 0.03125)^{\top}		&&		x_{16} &=& (2.817379, 2.817379)^{\top}	\\ 
					x_5 	&=& (0.0, -0.007812)^{\top}		&&		x_{17} &=& (2.831189, 2.831189)^{\top}	\\ 
					x_6 	&=& (0.0, 0.001953)^{\top}		&&		x_{18} &=& (2.827737, 2.827737)^{\top}	\\ 
					x_7 	&=& (0.0, -0.000488)^{\top}		&&		x_{19} &=& (2.8286, 2.8286)^{\top}		\\ 
					x_8 	&=& (0.0, 0.000122)^{\top}		&&		x_{20} &=& (2.828384, 2.828384)^{\top}	\\ 
					x_9 	&=& (0.0, -0.000031)^{\top}		&&		x_{21} &=& (2.828438, 2.828438)^{\top}	\\ 
					x_{10} 	&=& (0.0, 0.000008)^{\top}		&&		x_{22} &=& (2.828424, 2.828424)^{\top}	\\ 
					x_{11} 	&=& (0.0, -0.000002)^{\top}		&&		x_{23} &=& (2.828428, 2.828428)^{\top}
				\end{array}
			\end{align*}
			As one sees, we need 23 iteration steps to the optimum. 
			We compute the iterates with a precision of $10^{-6}$.
			Using the same parameters as above but with higher precision results in more iterations, e.g., for a precision of $10^{-16}$ we need $56$ iterations.
			Of course, these are more iterations than choosing the constant step size above but one needs to keep in mind that we avoid here the dependence of a suitable choice of the step size value, where unsuitable choices lead to a failing algorithm.

%%%%%%%%%%%%%%%%%%%%%%%%%%%%%%%%%%%%%%%%%%%%%%%%%%%%%%%%%%%%%%%%%%%%%%%%%%%%%%%%%%%%%%%%%%%%%%%%%%%%%%%%%%%%%%%%%%%%%%%%%%%%%%%%%%%%%%%%%%%%%%%%%%%%%%%%%%%%%%%%%%%%%%%%%%%%%%%%%%%%%%%%%%%%%%%%%%%%%%%%%%%%%%%%%%%%%%%%%%%%%%%%%%%%%%%%%%%%%%%%%%%%%%%%%%%%%%%%%%%%%%%%%%%%%%%%%%%%%%%%%%%%%%%%%%%%%%%%%%%%%%%%%%%%%%%%%%%%%%%%%%%%%%%%%%%%%%%%%%%%%%%%%%%%%%%%%%%%%%%%%%%%%%%%%%%%%%%%%%%%%%%%%%%%%%%%%%%%%%%%%%%%%%%%%%%%%%%%%%%%%%%%%%%%%%%%%%%%%%%%%%%%%%%%%%%%%%%%
\section{Conclusion and outlook}
\label{sec:conclusion}

	In this paper, the initial question 
	\emph{What if we want to optimize on a space that is not a Riemannian manifold?}
	is answered by formulating an 
	optimization technique on diffeological spaces.
	Concentrating on the method of steepest descent on Riemannian manifolds, this paper discusses the necessary objects for a steepest descent method on diffeological spaces.
	We notice that we need a diffeological version of a Riemannian structure, a diffeological gradient as well as an exponential map.
	In order to define a diffeological Riemannian space, the diffeological concept of a tangent space fitting to optimization methods is needed.
	This paper gives a novel definition of a diffeological tangent space based on the  tangent space definition in the Euclidean case that concentrates on paths.
	Moreover, a diffeological Riemannian space is defined.
	With the help of a diffeological Riemannian structure, a diffeological gradient is specified which is the key object of the optimization algorithm.
	Three examples are considered in this paper. First, it is shown that the tangent space of the axis in $\RR^2$  is $\RR^2$. Thus, the Riemannian structure as well as the diffeological gradient are the same as in the Euclidean case.
	The second example covers the set of all polynomials from $ \RR $ to $ \RR $. 
	We prove that the tangent space at any point is given by $ \RR^{\infty} $.
	The third one is considered in the last section of this paper.
	Here, we use our novel definitions to solve an optimization problem with  the steepest descent method in a diffeological space. 
	In particular, we generalize the steepest descent method known from manifolds to diffeological spaces in this paper.
	Moreover, we generalize the Levi-Civita connection and the retraction to diffeological spaces.
	
	The results of this paper leave space for future research.
	For example, the Levi-Civita connection needs to be investigated; in particular, its existence should be clarified.
	Since it is possible to define an exponential map on a smooth manifold using a Levi-Civita connection (cf., e.g., \cite{HelgasonSigurdur}),
	we need to check if we obtain similar results in a diffeological setting.
	It is also left for future work to investigate other ways defining a diffeological exponential mapping and also its first order approximation, the retraction.
	Additionally,
		higher order methods should be investigated on diffeological spaces requiring a diffeological Hessian or at least an approximation of it.

%%%%%%%%%%%%%%%%%%%%%%%%%%%%%%%%%%%%%%%%%%%%%%%%%%%%%%%%%%%%%%%%%%%%%%%%%%%%%%%%%%%%%%%%%%%%%%%%%%%%%%%%%%%%%%%%%%%%%%%%%%%%%%%%%%%%%%%%%%%%%%%%%%%%%%%%%%%%%%%%%%%%%%%%%%%%%%%%%%%%%%%%%%%%%%%%%%%%%%%%%%%%%%%%%%%%%%%%%%%%%%%%%%%%%%%%%%%%%%%%%%%%%%%%%%%%%%%%%%%%%%%%%%%%%%%%%%%%%%%%%%%%%%%%%%%%%%%%%%%%%%%%%%%%%%%%%%%%%%%%%%%%%%%%%%%%%%%%%%%%%%%%%%%%%%%%%%%%%%%%%%%%%%%%%%%%%%%%%%%%%%%%%%%%%%%%%%%%%%%%%%%%%%%%%%%%%%%%%%%%%%%%%%%%%%%%%%%%%%%%%%%%%%%%%%%%%%%%
\section*{Acknowledgment}
	This work has been partly supported by the German Research Foundation (DFG) within the priority program SPP1962/2 under contract number WE~6629/1-1.

\bibliographystyle{plain}
\bibliography{CitationGoldammer_New}

\end{document}